\documentclass[11pt]{amsart}

\usepackage[margin=1.1in]{geometry}
\usepackage{math_headers}
\usepackage{mathtools}
\usepackage{bm}

\newcommand{\PiPop}{\Pi^{\mr{pop}}_{n,\beta}}
\newcommand{\PiEmp}{\Pi^{\mr{emp}}_{n,\beta}}
\newcommand{\sPiPop}{\smash{\Pi^{\mr{pop}}_{n,\beta}}}
\newcommand{\sPiEmp}{\smash{\Pi^{\mr{emp}}_{n,\beta}}}

\title{Linear Response Estimators for Singular Statistical Models}
\author{Chris Elliott and Daniel Murfet}
\date{\today}

\begin{document}

\begin{abstract}
We define \emph{susceptibilities} as a measure of the response of an observable quantity of a parameterized statistical model to a perturbation of the data for a general class of observables.  We define estimators for these susceptibilities as statistics in a sequence of $n$ data-points and prove that these estimators are consistent and asymptotically unbiased in the large $n$ regime.
\end{abstract}

\maketitle

\section{Introduction}
\emph{Susceptibilities} \cite{lang1,lang2,lang3} are an interpretability tool by which one may study the response of a parameterized statistical model such as a neural network to a perturbation of its environment.  At the theoretical level these susceptibilities are well-grounded by the field of singular learning theory, in which the (Bayesian) learning of a statistical model is controlled by the singular geometry of the loss.  A closely related object is the \emph{Bayesian influence function} of \cite{singfluence1}, which measures the sensitivity of posterior expectations to reweighting individual training samples.  In reality one uses a \emph{susceptibility estimator}, which is a statistic in the data, in order to apply the conceptual interpretation of susceptibilities in real examples.

Susceptibilities depend on a choice of two parameters: a perturbation of the data distribution, and an observable -- a generalized function on the parameter space.  For any perturbation and observable satisfying mild conditions we construct estimators for the associated population level susceptibility, depending on a sequence of data-points $x_1, \ldots, x_n$, and show that these statistics are \emph{consistent}.  A precise statement of our main result is as follows.  In the statement below we fix a data manifold $X$ with true probability distribution $q$, a real analytic family of probability distributions $p_w$ on $X$ parameterized by $w \in W$ and study the Kullback--Leibler loss function
\[K(w) = \int_X q(x)f(x,w) \d x = \int_X q(x) \log \frac{q(x)}{p_w(x)} \d x.\]

\begin{theorem} \label{main_theorem}
Let $\OO = P \delta_S$ where $P$ is a linear differential operator of order $M$ and $S \sub W$ is a neatly embedded submanifold of parameter space.   Suppose that all coefficient lifts of $P$ are $L^4(q)$-valued real analytic, and that all coefficient lifts and their derivatives up to order $M$ have relatively finite variance.  Let $\xi \in L^4_0(q)$ be a perturbation and write $\Delta K(w) = \int_X \xi(x) f(x,w) q(x) \d x$ for the loss variation.  Then as $n\beta \to \infty$ and $\beta \to 0$,
\[\mr{Cov}^{\mr{res}}_{\sPiEmp}(\OO_n, \Delta K_n) - \mr{Cov}^{\mr{res}}_{\sPiPop}(\OO, \Delta K) \to 0\]
in probability, where the restricted covariance for differential observables includes the normalization factor $(n\beta)^{-M}$ (see \S\ref{sec:differential}).
\end{theorem}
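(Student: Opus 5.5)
The plan is to reduce the statement to a comparison of tempered posteriors, and to control that comparison by concentration of the empirical loss around the population loss. Write $K_n(w) = \frac1n\sum_i f(x_i,w)$ for the empirical KL loss and $\Delta K_n(w) = \frac1n \sum_i \xi(x_i) f(x_i,w)$ for the empirical loss variation. The two posteriors are $\PiEmp \propto \varphi(w) e^{-n\beta K_n(w)}$ and $\PiPop \propto \varphi(w) e^{-n\beta K(w)}$. The restricted covariance of $\OO = P\delta_S$ against a function $g$ is, after integrating by parts against $\delta_S$, a finite sum of terms of the form
\[(n\beta)^{-M}\int_S a_\alpha(w)\,\partial^\alpha\!\big(g(w)e^{-n\beta L(w)}\varphi(w)\big)\,\d\sigma_S \Big/ Z,\]
together with the corresponding product-of-expectations term. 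Here $L$ is $K$ or $K_n$, and the $a_\alpha$ are the coefficients of $P$ (or their coefficient lifts $a_\alpha(x,w)$, averaged empirically to form $\OO_n$). Expanding $\partial^\alpha$ by Leibniz, each derivative falling on the exponential contributes a factor $n\beta\,\partial L$. The normalization $(n\beta)^{-M}$ therefore exactly absorbs the worst case. The general term is a polynomial in $\partial^{\gamma}L$, in $(n\beta)^{-1}$, in derivatives of the coefficients and of $g$, integrated against the restricted Gibbs density on $S$.

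First, I show that every ingredient converges uniformly on compacts with the right rates. The required rates are $\sup_w|\partial^\gamma(K_n-K)| = O_p(n^{-1/2})$ for $|\gamma|\le M$, and likewise for $\Delta K_n-\Delta K$ and for the empirical coefficient lifts. This is where the hypotheses enter. Real analyticity with values in $L^4(q)$ allows a Cauchy-estimate/chaining argument, in the style of Watanabe's empirical process bound for analytic families, giving $\sqrt n$-tightness of the empirical processes in $C^M$. The relatively finite variance assumption controls the variance of the derivatives of the coefficient lifts relative to $K$. The $L^4$ integrability of $\xi$ together with that of the lifts makes the products $\xi f$ and $a_\alpha\cdot f$ have finite second moments.

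Second, I compare the Gibbs factors. On the event above,
\[e^{-n\beta K_n} = e^{-n\beta K}\,e^{-n\beta (K_n-K)},\]
and $n\beta\sup|K_n-K| = O_p(\beta\sqrt n)$. This is not small in general, so I cannot simply replace the weights. Instead I use the singular fluctuation trick: write $K_n - K = n^{-1/2}\,\eta_n(w)\sqrt{K(w)}$ plus a constant, using the relatively finite variance hypothesis as in Watanabe. The normalized process $\eta_n$ is tight, so the perturbation in the exponent is
\[\beta\sqrt{n}\,\eta_n\sqrt{K} = \sqrt{\beta}\,\eta_n\,\sqrt{n\beta K}.\]
Because $n\beta K$ is $O(1)$ under the posterior (its expectation is of order $\lambda$), this perturbation is $O_p(\sqrt\beta)\to0$ under both posteriors, including the restricted ones on $S$, and the additive constant cancels in the ratio. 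This is why both limits $\beta\to0$ and $n\beta\to\infty$ are needed. The condition $n\beta\to\infty$ is needed to localize both measures near the zero set of $K$, where the relative variance bound is effective; together with the $(n\beta)^{-1}$ factors it also kills the lower-order Leibniz terms.

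Third, I assemble the pieces: dominated convergence on the compact $S\cap\operatorname{supp}\varphi$ gives convergence of each numerator and of the normalizers. Then, using $\mathrm{Cov} = \mathbb E[\OO g]-\mathbb E[\OO]\,\mathbb E[g]$, the difference tends to $0$ in probability. The main obstacle I expect is the second step for the derivative terms. There the factors $(n\beta\,\partial K_n)^k$ differ from $(n\beta\,\partial K)^k$ by terms with $n\beta\,\partial(K_n-K)$, and these need the relative variance bound on derivatives. My first approach would be to bound each derivative of $K_n - K$ by $n^{-1/2}$ times $\sqrt{K}$ plus a controllable remainder. I would then show that the resulting moments $\mathbb E\big[(n\beta K)^{j/2}\big]$ under the restricted posterior stay bounded, using the resolution-of-singularities normal form of $K$ along $S$.
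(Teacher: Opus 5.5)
Your argument is correct in outline and uses the same mechanism as the paper: Watanabe's standard form, in which the empirical fluctuation enters the exponent only as $\sqrt{\beta t}$ times a tight process (with $t = n\beta K$), and so disappears as $\beta \to 0$ (compare the expansion of $S_\alpha$ in Lemma \ref{population_and_renormalized_posteriors}). The proof is organized differently, however.

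\textbf{The paper's route.} It first puts $\OO$ in local normal form (tangential integration by parts, boundary strata $T_{ij}$, partition of unity). It then Leibniz-expands only normal derivatives of $e^{-n\beta G}\varphi$ and reduces to the functional theorem. That theorem is proved through the coupling kernel: the asymptotic moment lemmas are applied pointwise in $(x,x')$ to insertions $\wt g(x,\cdot) f(x',\cdot)$, followed by dominated convergence on $X^2$. This is where RFV of the lifts and nonvanishing leading coefficients enter.

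\textbf{Your route.} You integrate out $x$ first. You trade $g_n$, $\partial^\gamma K_n$ and $\Delta K_n$ for their population counterparts at uniform cost $O_p(n^{-1/2})$. What remains is a comparison of empirical and population (restricted) posteriors on fixed, bounded, deterministic insertions. A total-variation bound on the reweighting factor $e^{\sqrt\beta\,\eta_n\sqrt{n\beta K}}$ handles this, in the spirit of Remark \ref{exact_convergence}. This is shorter, never uses RFV of the lifts, and is uniform over bounded insertions. The paper's route in exchange gives the explicit leading asymptotics of each moment.

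\textbf{Points to tighten.} To make your comparison step rigorous, upgrade ``$n\beta K = O(1)$ under the posterior'' to a uniform exponential moment. On the event $\sup|\eta_n| \le C$, use $C\sqrt{\beta t} \le \epsilon t + C^2\beta/(4\epsilon)$ together with boundedness of $\bb E_{\Pi^{\mr{pop},S}_{n,\beta}}[e^{\epsilon n\beta K}]$, which is a ratio of restricted partition functions at $(1-\epsilon)n\beta$ and $n\beta$. Like the paper, you tacitly assume that $K|_S$ vanishes somewhere on $S$ and that $f$ itself satisfies Watanabe's RFV condition. Also, the quantity in the statement is the one of Definition \ref{def:global_renormalized}. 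There, boundary-stratum terms are normalized by their own $Z^{T_{ij}}$ and no derivatives fall on $\Delta K$. Your single expansion on $S$ is therefore a slightly different object, but your argument applies verbatim to each term of the normal form.

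\textbf{The ``main obstacle'' you flag is not one.} After the $(n\beta)^{-M}$ normalization, a Leibniz term with $r$ derivatives on the exponential is $(n\beta)^{r-M}\prod_i \partial^{\gamma_i} L$ times bounded factors. No factor of $n\beta$ survives in front of $\partial(K_n - K)$. Its empirical-minus-population difference is therefore $(n\beta)^{r-M} O_p(n^{-1/2})$ by your uniform bounds, which is exactly how the paper disposes of $Q^{K_n}_j - Q^K_j$. Do not pursue a bound $|\partial(K_n - K)| \le C n^{-1/2}\sqrt K$. It fails already for a regular model: $q = N(0,1)$, $p_w = N(w,1)$ gives $K_n - K = -\bar x\, w$, so $\partial_w(K_n - K) = -\bar x \ne 0$ at $w = 0$. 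Correspondingly, $n\beta\,\partial(K_n - K)$ is not small when $\beta\sqrt n \to \infty$.
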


The proof proceeds by reducing differential observables to functional (zeroth-order) ones via integration by parts and the Leibniz rule, then establishing convergence in the zeroth-order case using a coupling kernel that generalizes the loss kernel of \cite{sing2}. For precise statements and proofs we refer to \S\ref{sec:functional} (Theorem \ref{convergence_zeroth_order}) for the functional case and \S\ref{sec:differential} (Theorem \ref{convergence_differential}) for the general case.

To orient the reader we briefly describe the objects that appear in the paper and the relationships between them.  The \emph{population susceptibility} $\chi^{\mr{pop}}_{n,\beta}(\OO, \xi)$ is defined as a derivative of an expectation under the ``annealed'' posterior distribution with density function proportional to $\exp(-n\beta K(w)) \varphi(w)$; in Lemma \ref{susceptibility_covariance_lemma} we identify this derivative with a covariance against the perturbation of the loss.  In practice one cannot compute this object: the population loss $K(w)$ requires knowledge of the true distribution $q$, and the observable $\OO$ may involve coefficients that are themselves population expectations.  Instead, the quantities estimated in \cite{lang2,lang3} are covariances computed under the actual tempered posterior with density proportional to $\exp(-n\beta K_n(w)) \varphi(w)$, with all population coefficients replaced by empirical estimators for the associated observables.  The main result of this paper (Theorem \ref{main_theorem}) establishes that this empirical quantity converges to its population counterpart in the SLT regime $n\beta \to \infty$, $\beta \to 0$.  Under a mild additional integrability condition (Theorem \ref{asymptotic_unbiasedness}) the estimator is also asymptotically unbiased.  A further layer of approximation arises in practice where one must use Monte Carlo techniques like stochastic gradient Langevin dynamics to estimate sampling from the posterior; this introduces an additional source of potential bias that we do not attempt to address in the present work.

\subsection{Related work}

The identification of a susceptibility with a posterior covariance has a long history.  In the theory of robust statistics, the \emph{influence function} of Hampel \cite{hampel1974influence} measures the sensitivity of a statistical functional to infinitesimal contamination of the data distribution.  In the Bayesian setting, Gustafson \cite{gustafson2000local} studies the derivative of posterior expectations with respect to perturbations of the prior.  Giordano, Broderick, and Jordan \cite{giordano2018covariances} use this classical identity to correct the covariance estimates produced by mean-field variational Bayes, which systematically underestimates posterior variances.  Koh and Liang \cite{koh2017understanding} revived influence functions for deep learning, using Hessian-vector products to attribute predictions to training points.  The \emph{Bayesian influence function} of \cite{singfluence1} replaces the Hessian inversion with posterior covariance estimation via SGLD, and Mlodozeniec et al.\ \cite{mlodozeniec2025distributional} prove a related consistency result in a distributional training data attribution framework.  The susceptibility of \cite{lang2} is a special case of this family of ideas.  In the present paper we prove that the empirical susceptibility estimator is consistent in the singular setting, where the Fisher information degenerates and the Hessian-based approach is ill-defined.

Our main theorem operates in the regime $n\beta \to \infty$, $\beta \to 0$, where $\beta$ is the inverse temperature parameter appearing in the tempered posterior $\propto \exp(-n\beta K(w))\varphi(w)$.  This double limit is central to singular learning theory: Watanabe's widely applicable Bayesian information criterion (WBIC) \cite{WatanabeWBIC} uses $\beta = 1/\log n$ to estimate the free energy of a singular model, and the asymptotic expansions of \cite{WatanabeGrey, WatanabeGreen} on which we rely operate in this regime.  Ray, Avella Medina, and Rush \cite{ray2026tempering} study the same double limit for regular models, proving that all posterior moments converge to Gaussian moments under locally asymptotically normal (LAN) conditions, with a sharp threshold at $\beta \sim 1/\sqrt{n}$ below which the Bernstein--von Mises theorem fails.  Our results complement theirs by establishing convergence of a covariance functional in the singular setting, where the LAN condition does not hold.  See also Drton and Plummer \cite{drton2017sbic} and Bochkina and Green \cite{bochkina2014nonregular}.

The term \emph{susceptibility} originates in statistical mechanics, where the static susceptibility is related to an equilibrium covariance by the fluctuation-dissipation theorem \cite{Kubo}.  The coupling kernel introduced in this paper generalizes the \emph{loss kernel} of \cite{sing2}, which measures functional coupling between data points through the posterior.

In practice one may approximate posterior samples using stochastic gradient Langevin dynamics \cite{wellingteh2011}.  The discretization bias introduced by SGLD is analyzed by Vollmer, Zygalakis, and Teh \cite{vollmer2016sgld}, who show that at constant step size the invariant distribution has non-vanishing bias.  Standard global convergence guarantees for SGLD rely on assumptions that are likely incompatible with the degenerate loss landscapes of singular models; Hitchcock and Hoogland \cite{hitchcock2025global} argue for a shift of focus to local posterior sampling and provide empirical benchmarks.  The biases introduced by this additional layer of approximation (see Remark \ref{rmk:three_layers} and \S\ref{SGLD_section}) are separate from the population-to-empirical bridge established here and are not investigated in the present work.

\section{Setup}

We begin by laying out the class of statistical models to which our results apply.  We will consider the setting of \emph{singular models} as defined by Watanabe \cite{WatanabeGrey,WatanabeGreen}.  That is, we study a distribution learning setting where the loss function is not required to be Morse, or even Morse--Bott: in particular its minimal locus does not typically consist of isolated points, but instead may consist of a \emph{singular subspace} of positive dimension and potentially elaborate geometric structure.

Let $W \sub \RR^d$, our \emph{parameter space}, be a compact subanalytic set determined by finitely many real analytic inequalities $\pi_1(w) \ge 0, \ldots, \pi_k(w) \ge 0$, equipped with the Lebesgue measure $\d w$.  Fix a real analytic family $p(\cdot|w) \d x$ of probability distributions on a data manifold $X$ with volume form $\d x$ parameterized by $w \in W$, a \emph{true} distribution $q(x) \d x$ on $X$ with strictly positive smooth density, and a real analytic \emph{prior} probability distribution $\varphi(w) \d w$ on $W$.  

\begin{remark}
In the language of differential geometry $W$ is a real analytic \emph{manifold with corners} \cite{Joyce, Melrose}.  When we talk about local \emph{charts} in $W$ we will mean charts in this setting, i.e. open subsets $U \sub W$ equipped with real analytic diffeomorphisms 
\[\psi \colon U \to \RR_{\ge 0}^j \times \RR^{k-j}\]
for some pair of natural numbers $j \le k$.
\end{remark}

Following \cite{WatanabeGrey}, write
\[f(x,w) = \log \frac{q(x)}{p(x|w)}\]
for the log density ratio.  
\begin{definition}
The \emph{population loss} is the Kullback--Leibler function $K \colon W \to \RR$ defined by
\[K(w) = \mr{KL}(q || p_w) = \int_X f(x,w) q(x) \d x.\]
Let $n$ be a natural number and let $x_1, \ldots, x_n$ be i.i.d random variables sampled from the true distribution $q$. The \emph{empirical loss} is
\[ K_n(w) = \frac 1n \sum_{i=1}^n f(x_i, w).\]
\end{definition}

The empirical loss $K_n$ is a random real analytic function on $W$.  We will assume it satisfies an integrability condition in the data direction in the following sense (see also the discussion in \cite[Section 5.1]{WatanabeGrey}).
\begin{definition}
Let $s \in [1,\infty]$.  A function $F \colon X \times W \to \RR$ is \emph{$L^s(q)$-valued real analytic} if for all $w \in W$ there exists an open chart $U$ containing $w$ with local coordinate $u$ for which
\[F(x,u) = \sum_{\alpha} a_\alpha(x) u^\alpha\]
is a power series with $a_\alpha \in L^s(q)$ for all $\alpha$, absolutely convergent in the normed space $L^s(q)$ for $u$ in a disk of positive radius. The sum here ranges over multi-indices $\alpha \in \ZZ_{\ge 0}^d$.
\end{definition}

\begin{assumption} \label{standing_assumption}
We impose the standing assumption that $f$ is $L^4(q)$-valued real analytic.
\end{assumption}

We will often wish to study derivatives of $L^s(q)$-valued real analytic functions in the $W$ direction.  We will use the following fact.
\begin{prop} \label{derivative_of_Ls_real_analytic}
If $F$ is $L^s(q)$-valued real analytic then so is any partial derivative $\dd^\beta_w F$ for $\beta \in \ZZ_{\ge 0}^d$.
\end{prop}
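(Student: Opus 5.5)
It suffices to treat a single partial derivative $\dd_{w_i}$, since the general case $\dd^\beta_w$ follows by induction on $|\beta|$.

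Fix $w_0 \in W$. Choose a chart $U \ni w_0$ with coordinate $u$ centred at $w_0$ on which
\[F(x,u) = \sum_\alpha a_\alpha(x) u^\alpha\]
converges absolutely in $L^s(q)$ for $|u| < r$. Here we use the polydisc/max-norm convention, so absolute convergence means
\[\sum_\alpha \|a_\alpha\|_{L^s(q)} \rho^{|\alpha|} < \infty \quad \text{for every } \rho < r.\]
(If the convergence is only known on a Euclidean ball, shrink $r$ to a smaller polydisc.) The candidate expansion for the derivative is the termwise one:
\[\dd_{u_i} F(x,u) = \sum_\alpha \alpha_i\, a_\alpha(x)\, u^{\alpha - e_i}.\]

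\textbf{Step 1: absolute convergence of the candidate series.} The coefficients $\alpha_i a_\alpha$ lie in $L^s(q)$. For $|u| \le \rho < \rho' < r$ we have
\[\alpha_i \rho^{|\alpha|-1} \le C_{\rho,\rho'}\, (\rho')^{|\alpha|},\]
because $k(\rho/\rho')^k$ is bounded in $k$. Hence the termwise series converges absolutely in $L^s(q)$ on the smaller polydisc $|u| < r$. This is the usual Cauchy–Hadamard argument, carried out in the Banach space $L^s(q)$.

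\textbf{Step 2: the series really is the derivative.} The main obstacle is to say what $\dd_w F$ means and to check that it agrees with the series.

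First, the Banach-space-valued map $u \mapsto F(\cdot,u) \in L^s(q)$ is analytic, so its Fréchet derivative is the termwise series. To see this, split
\[\frac{F(\cdot,u+te_i)-F(\cdot,u)}{t} - \sum_\alpha \alpha_i a_\alpha u^{\alpha-e_i}\]
into one piece per $\alpha$. Each piece is bounded in norm by $\|a_\alpha\| \cdot \bigl|\tfrac{(u+te_i)^\alpha-u^\alpha}{t} - \alpha_i u^{\alpha-e_i}\bigr|$. By the mean value theorem this is at most
\[\|a_\alpha\|\, |t|\, |\alpha|^2 (\rho')^{|\alpha|-2},\]
which is summable. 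Hence the difference tends to $0$ as $t \to 0$.

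Second, the $L^s$ derivative must agree with the pointwise partial derivative $\dd_{w_i}F(x,w)$ for $q$-a.e.\ $x$. Absolute convergence in $L^s$ implies $\sum_\alpha |a_\alpha(x)|\rho^{|\alpha|} < \infty$ for a.e.\ $x$ (monotone convergence applied to the $L^1$ norm, using $L^s(q) \subset L^1(q)$ since $q$ is a probability measure). So for a.e.\ $x$ the scalar power series converges on the polydisc, and there termwise differentiation is classical. The two notions of derivative therefore coincide a.e.

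\textbf{Step 3: changing back to $w$-coordinates.} Finally, $\dd_{w_i} = \sum_j \frac{\dd u_j}{\dd w_i}\,\dd_{u_j}$, where the coefficients $\frac{\dd u_j}{\dd w_i}$ are real analytic scalar functions. A product of a scalar real analytic function with an $L^s$-valued power series is again an $L^s$-valued power series, by the Cauchy product, which converges absolutely because both factors do. This holds near every point of $W$, so $\dd_{w_i}F$ is $L^s(q)$-valued real analytic. Induction on $|\beta|$ then completes the argument.
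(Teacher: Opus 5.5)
Your argument is correct. The paper does not prove this statement itself: it defers entirely to \cite[Lemma 4.2.7]{HitchcockThesis}, so there is no in-text route to compare against, and your proposal works as a self-contained replacement. It follows the standard route. First, you differentiate the Banach-space-valued power series termwise, using Cauchy--Hadamard and a second-order Taylor bound on the difference quotients. Second, you identify the result with the pointwise derivative via the majorant $\sum_\alpha |a_\alpha(x)|\rho^{|\alpha|}$, which is the same device the paper uses in the proof of Proposition \ref{Ls_pointwise}. Third, you use the chain rule and a Cauchy product to pass from the chart coordinates $u$ to the ambient coordinates $w$ in which $\dd^\beta_w$ is taken. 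That last step is worth having. The definition of $L^s(q)$-valued analyticity is phrased in chart coordinates while the proposition is about ambient derivatives, and the paper leaves this implicit.

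There are two small points to tighten; neither affects the conclusion.

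\emph{Step 3 needs $C^1$, not just partials.} The chain rule for the $L^s(q)$-valued map requires the map to be $C^1$ in the Fr\'echet sense, not merely to have coordinate partial derivatives, which is all Step 2 verifies. This does follow, because each differentiated series converges uniformly on closed subpolydiscs and is therefore continuous in $u$. Alternatively, apply the classical chain rule pointwise for a.e.\ $x$ and only then form the Cauchy product.

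\emph{The pointwise derivative needs the right representative.} The pointwise partial derivative $\dd_{w_i}F(x,w)$ only makes sense for the canonical representative $\sum_\alpha a_\alpha(x)u^\alpha$, as in Proposition \ref{Ls_pointwise}. An arbitrary representative of $F$ can differ from it on a $u$-dependent null set, and the union of these over uncountably many $u$ need not be null. So the claim that the two derivatives coincide a.e.\ should be stated for that representative.
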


We refer to \cite[Lemma 4.2.7]{HitchcockThesis} for a proof.  We also invoke the following implication from $L^s(q)$-valued to pointwise analyticity when applying moment lemmas to insertions and when verifying domination conditions.

\begin{prop} \label{Ls_pointwise}
Let $F$ be $L^s(q)$-valued real analytic on a compact subspace $W \sub \RR^d$.  There is a measurable representative of $F$ for which:
\begin{enumerate}
    \item for $q$-almost every $x \in X$, the function $w \mapsto F(x, w)$ is real analytic on $W$;
    \item $\sup_{w \in W} |F(\cdot, w)| \in L^s(q)$.
\end{enumerate}
\end{prop}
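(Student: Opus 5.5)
The plan is to use compactness to reduce to finitely many local charts, and in each chart to build the representative directly from the power series expansion. Since $W$ is compact, choose finitely many points $w_1, \ldots, w_N$ with charts $U_j \ni w_j$ and local coordinates $u$ in which
\[F(x,u) = \sum_\alpha a^{(j)}_\alpha(x) u^\alpha\]
converges absolutely in $L^s(q)$ on a disk of radius $r_j > 0$. Shrinking the charts, we may assume the sets $V_j$ corresponding to the closed disks of radius $r_j/2$ still cover $W$. This is possible because the open disks of radius $r_j/2$ around each point of $W$ form an open cover, and we extract a finite subcover.

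On a fixed chart, fix a representative of each $a^{(j)}_\alpha$ and consider the function
\[A_j(x) = \sum_\alpha |a^{(j)}_\alpha(x)|\, \rho_j^{|\alpha|}, \qquad \rho_j = 3r_j/4 .\]
Absolute convergence in $L^s(q)$ at radius $\rho_j$ (inside the disk of convergence) means $\sum_\alpha \|a^{(j)}_\alpha\|_{L^s} \rho_j^{|\alpha|} < \infty$. By Minkowski's inequality together with monotone convergence, $A_j \in L^s(q)$, and so $A_j(x) < \infty$ for $q$-a.e.\ $x$. For such $x$ the scalar power series $\sum_\alpha a^{(j)}_\alpha(x) u^\alpha$ converges absolutely on the polydisk of radius $\rho_j$. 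It therefore defines a real analytic function of $u$ on the closed disk of radius $r_j/2$, and there it is bounded in absolute value by $A_j(x)$. This pointwise sum is measurable in $(x,u)$, since it is a pointwise limit of measurable partial sums. It agrees with $F$ as an element of $L^s(q)$ for each fixed $u$, because $L^s$ limits and a.e.\ limits coincide along the partial sums.

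The main obstacle is gluing the chart-wise representatives into a single measurable representative on $W$. On an overlap $V_j \cap V_k$ the two pointwise series agree in $L^s(q)$ for each fixed $w$, hence for a.e.\ $x$ at each fixed $w$. A priori the null set depends on $w$, and there are uncountably many $w$. I would resolve this with analyticity. Pick a countable dense subset $D \subset W$ and discard the countable union of null sets on which the representatives disagree at some point of $D$. For the remaining $x$, both functions $w \mapsto F_j(x,w)$ and $w \mapsto F_k(x,w)$ are real analytic and hence continuous on the overlap, so agreeing on a dense subset forces agreement everywhere on the overlap. This gives a well-defined function $\widetilde F(x,w)$ off a $q$-null set, where we set $\widetilde F = 0$. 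It is measurable, being measurable on each of the finitely many pieces, and for a.e.\ $x$ it is real analytic in $w$ near every point, which proves (1). Some care is needed if overlaps have empty interior at corners, but density of $D$ in each $V_j$, with its closure-of-interior structure, should handle this.

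Finally, for (2), for a.e.\ $x$ we have
\[\sup_{w \in W} |\widetilde F(x,w)| \le \max_j A_j(x) \le \sum_{j=1}^N A_j(x),\]
and the right-hand side is a finite sum of elements of $L^s(q)$. This argument works uniformly for $s = \infty$ as well. The supremum is measurable, since by continuity it equals the supremum over the countable set $D$. This completes the plan.
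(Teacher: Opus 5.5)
Your proposal follows the paper's own proof: a finite cover by polydiscs, the dominating function $\sum_\alpha |a^{(j)}_\alpha|\rho_j^{|\alpha|}\in L^s(q)$ obtained from Minkowski, a.e.\ absolute convergence of the chart series, and $\sup_W|\widetilde F|\le\sum_j A_j$. It is more careful than the paper on two points: you shrink the cover so that the domination radius exceeds it, and you check consistency of the chart representatives, which the paper compresses into ``gluing across the finite union of null sets.'' One fix is needed in that check: compare $F_j(x,\cdot)$ and $F_k(x,\cdot)$ on the relatively open sets $B_j\cap B_k\cap W$, where $B_j$ is the open polydisc of radius $\rho_j$, not on $V_j\cap V_k$. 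There $D$ is automatically dense and both series are continuous, so this covers lower-dimensional overlaps of the closed $V_j$ (which density of $D$ in $V_j$ does not), and it also gives analyticity of the glued function across the boundaries of the $V_j$.
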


\begin{proof}
Cover $W$ by finitely many open polydiscs $U_k$ inside which the local Taylor series $F(w) = \sum_\alpha a^{(k)}_\alpha (w - w_k)^\alpha$ converges absolutely in $L^s(q)$.  Fix a radius $r_k$ strictly inside the radius of convergence of the $k$-th polydisc and set $G_k(x) := \sum_\alpha |a^{(k)}_\alpha(x)| r_k^{|\alpha|}$.  By the triangle inequality $\|G_k\|_{s} \le \sum_\alpha \|a^{(k)}_\alpha\|_{s} r_k^{|\alpha|} < \infty$, so $G_k \in L^s(q)$ and is finite for $q$-almost every $x$.  Hence the pointwise series converges absolutely on the polydisc of radius $r_k$ for $q$-almost all $x$, defining a real analytic function of $w$; gluing across the finite union of null sets gives us (1).  For (2), $\sup_{w \in U_k} |F(x, w)| \le G_k(x)$ on each $U_k$, and the supremum over the compact $W$ is dominated by $\max_k G_k \le \sum_k G_k \in L^s(q)$.
\end{proof}

We will introduce the notion following \cite{lang2} of a \emph{susceptibility} as an instantiation in the singular learning setting of linear response theory.  The term is borrowed from statistical mechanics (as in e.g. \cite{Kubo}), where the susceptibility of a system is the derivative of a response quantity with respect to an external field.  In the Bayesian robustness literature the same concept is referred to as \emph{sensitivity}: a derivative of a posterior functional with respect to a perturbation of the prior or data distribution \cite{gustafson2000local}.  To define these data we will need to specify two inputs.
\begin{enumerate}
    \item A \emph{perturbation} of the true distribution $q$, i.e. an infinitesimal linear deformation of $q$ in the space of probability distributions.
    \item An \emph{observable} on $W$, i.e. a generalized function whose expectation we can study the variation in under the perturbation.
\end{enumerate}
Setting up both of these inputs carefully requires some assumptions, which we now spell out.

\begin{definition}
A \emph{perturbation} of $q$ is a tangent vector $\tau \in T_q \Delta^+(X)$, where $\Delta^+(X)$ is the space of strictly positive smooth probability density functions given its usual Fr\'echet manifold structure.  We identify such perturbations with signed density functions on $X$ and restrict to $q$-absolutely continuous measures of the form $\tau = \xi q$, where $\xi \in L^4_0(q) \cap C^\infty(X) := \{\xi \in L^4(q) \cap C^\infty(X) : \int_X \xi(x) q(x) \d x = 0\}$.
\end{definition}

\begin{remark} \label{rmk:perturbation_regularity}
The full tangent space $T_q \Delta^+(X)$ is canonically identified, via $h \mapsto h/q$, with the Fr\'echet space $C^\infty_0(q) = \{\xi \in C^\infty(X) : \int \xi\, q = 0\}$ of smooth functions with zero $q$-expected value. We are restricting attention to perturbations that satisfy an additional finiteness condition on their fourth moment.

 The $L^4(q)$ integrability condition is used in two ways.  First, it ensures that the loss variation $\Delta K(w) = \int \xi(x) f(x,w) q(x) \d x$ is well-defined and that the product $\xi f$ is $L^2(q)$-valued real analytic in $w$ (by H\"older's inequality, since $f$ is $L^4(q)$-valued real analytic by Assumption \ref{standing_assumption}).  This is the regularity required to apply Watanabe's empirical process bound \cite[Theorem 5.8]{WatanabeGrey} to the deformation of the loss under $\tau$.  Second, it ensures that the domination condition needed in Remark \ref{rmk:domination} is satisfied.  More general integrability conditions are discussed in Remark \ref{rmk:holder_tradeoff}.
\end{remark}

We will now introduce the other input into our linear response framework, an \emph{observable} on the parameter space $W$.  We referred to observables above as ``generalized functions''.  One can make this precise by talking about, for instance, the space of (Schwartz) distributions on $W$.  We will not wish, however, to work at this level of generality.  Instead we wish to restrict attention to those distributions generated by derivatives of delta distributions on sufficiently well-behaved submanifolds $S \sub W$.  We make this precise in the following way.

First, if $S \sub W$ is a compact subanalytic space we consider the $\delta$-distribution on $S$, denoted $\delta_S$, by
\[\delta_S(\Phi) = \int_S \Phi \d s\]
where $\Phi \in C^\infty(W)$ is a test function and $\d s$ is the restriction of the volume form on $W$ to $S$.  We will characterize ``well-behaved'' submanifolds according to the following definition.

\begin{definition}
An inclusion $\iota \colon S \inj W$ of subanalytic spaces in $\RR^d$ is a \emph{neat embedding} if $\iota$ is an embedding, $\dd S = S \cap \dd W$ and $S$ meets each boundary stratum of $W$ transversally.
\end{definition}

\begin{definition}
An \emph{observable} on $W$ is a distribution $\OO$ that admits a finite decomposition
\[\OO = \sum_i P_i \delta_{S_i}\]
where each $S_i \sub W$ is a closed subanalytic space neatly embedded in $W$ and each $P_i = \sum_{|\alpha| \le M_i} g_{i,\alpha} \dd^\alpha$ is a linear differential operator of order $M_i$ in the ambient coordinates of $\RR^d$.  Each coefficient $g_{i,\alpha}(w) = \int_X \wt g_{i,\alpha}(x,w) q(x) \d x$ is the population expectation of an $L^4(q)$-valued real analytic function $\wt g_{i,\alpha}$ on $W$ (in the sense of \cite[\S5.2]{WatanabeGrey}).  We call $\wt g_{i,\alpha}$ the \emph{lift} of $g_{i,\alpha}$.

An observable is \emph{functional} (or \emph{zeroth-order}) if $M_i = 0$ for all $i$ (i.e.\ each $P_i$ is multiplication by $g_i$), and \emph{differential} otherwise.

An observable is \emph{deterministic} if every coefficient lift is independent of $x$, i.e.\ $\wt g_{i,\alpha}(x,w) = g_{i,\alpha}(w)$ for all $x$.

Given data $x_1, \ldots, x_n$, the \emph{empirical observable} $\OO_n$ is obtained by replacing each coefficient $g_{i,\alpha}$ by its empirical counterpart $g_{i,\alpha,n}(w) = \frac 1n \sum_{j=1}^n \wt g_{i,\alpha}(x_j, w)$.
\end{definition}

\begin{example}
    The component observable $\phi_C(w) = \delta(u - u^*)[K(w) - K(w^*)]$ used in \cite{lang1,lang2} has $S = \{u^*\} \times C$, $\dd^{\alpha} = 1$ (zeroth order), and coefficient $g(w) = K(w) - K(w^*) = \int_X [f(x,w) - f(x,w^*)] q(x) \d x$ with lift $\wt g(x,w) = f(x,w) - f(x,w^*)$.  The delta function is deterministic (a geometric constraint on the support) but the coefficient is not.
\end{example}

Having set up our ingredients we will start to lay out the objects from distribution learning theory that we will study in our paper: the \emph{tempered Bayesian posterior distribution} and its population level counterpart.

\begin{definition}
    Fix an inverse temperature parameter $\beta > 0$.  Define the \emph{population and empirical posteriors} as
    \begin{align*}
        \PiPop &\propto \exp(-n\beta K(w)) \varphi(w) \d w \\
        \text{and }  \PiEmp &\propto \exp(-n\beta K_n(w)) \varphi(w) \d w
    \end{align*}
    respectively.  Write $Z^{\mr{pop}}_{n,\beta} = \int_W \exp(-n\beta K(w)) \varphi(w) \d w$ and $Z^{\mr{emp}}_{n,\beta} = \int_W \exp(-n\beta K_n(w)) \varphi(w) \d w$ for the partition functions.  Given an observable $\OO$ we define expectation values as
    \begin{align*}
        \bb E_{\sPiPop}(\OO) &= \frac{1}{Z^{\mr{pop}}_{n,\beta}} \langle \OO, \exp(-n\beta K(w)) \varphi(w) \rangle \\
        \text{and } \bb E_{\sPiEmp}(\OO) &= \frac{1}{Z^{\mr{emp}}_{n,\beta}} \langle \OO, \exp(-n\beta K_n(w)) \varphi(w) \rangle.
    \end{align*}
\end{definition}

\begin{remark}
    The population posterior distribution $\PiPop$ is sometimes, by analogy with statistical mechanics, referred to as the \emph{annealed posterior}.  It only depends on the combined parameter $n\beta$, not on $n$ and $\beta$ individually.
\end{remark}

\begin{definition}
    By linearity it suffices to define the susceptibility for an observable $\OO = P \delta_S$ where $P$ is a linear differential operator of pure order $M$.  The \emph{population susceptibility} of $\OO$ with respect to the perturbation $\xi$ is
    \[\chi^{\mr{pop}}_{n,\beta}(\OO, \xi) = \frac{1}{(n\beta)^{M+1}} \nabla_\xi \bb E_{\sPiPop}(\OO).\]
    For a functional observable ($M = 0$) this reduces to $\frac{1}{n\beta} \nabla_\xi \bb E_{\sPiPop}(\OO)$.
\end{definition}

\begin{remark} \label{rmk:normalization}
The additional factor of $(n\beta)^{-M}$ in the differential case is necessary to obtain a well-defined limit as $n\beta \to \infty$.  Each normal derivative of the posterior density $e^{-n\beta K}\varphi$ brings down a factor of $n\beta$ from the chain rule, so the expectation $\bb E_{\sPiPop}(\OO)$ can grow as $(n\beta)^M$ and without this normalization the susceptibility diverges.  The rescaling ensures that only the leading Fa\`a di Bruno term (involving products of first normal derivatives of $K$, which vanish on the vanishing locus $W_0$ of $K$) contributes in the limit, while sub-leading terms involving higher normal derivatives of $K$ are suppressed by negative powers of $n\beta$.
\end{remark}

We can realize susceptibilities as covariances.  This is a standard identity: in the variational Bayes setting the derivative of a posterior mean with respect to a data perturbation equals a posterior covariance \cite{giordano2018covariances}; we record the version for the tempered posteriors used in singular learning theory.  For sufficiently small $h \in \RR$ let $q^{h\xi} = (1+h\xi)q$ denote the time-$h$ flow of $q$ under $\xi$, let $K^{h\xi}$ denote the corresponding population loss, and write $\Delta K = K^{h\xi} - K$ for the loss variation at scale $h$.  Without loss of generality we may rescale $\xi$ so that we may take $h=1$.

\begin{remark}
One should remark that, given $\xi$, there may not exist an $h>1$ so that the flow $q^{h\xi}$ exists in the Fr\'echet manifold $\Delta^+(X)$.  Indeed, in the Fr\'echet manifold setting the Picard-Lindel\"of theorem may not hold, and vector fields need not be even locally integrable (see \cite[\S 32.12]{KrieglMichor} for a discussion of this point).  One can always find a non-linear integral curve whose derivative at $q$ coincides with the tangent vector $\xi q$.  More straightforwardly, the flow $q^{h\xi}$ does always exist in the space of smooth signed density function and this more general setting is sufficient for the argument that follows.
\end{remark}

\begin{lemma}[Covariance lemma] \label{susceptibility_covariance_lemma}
We may identify the susceptibility associated to the observable $\OO$ and the perturbation $\xi$ as
    \[\chi^{\mr{pop}}_{n,\beta}(\OO, \xi) = - \frac{1}{(n\beta)^M}\mr{Cov}_{\sPiPop}(\OO, \Delta K).\]
\end{lemma}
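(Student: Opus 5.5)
The identity follows by differentiating the quotient $\bb E_{\sPiPop}(\OO) = N(h)/Z(h)$ along the path $q^{h\xi} = (1+h\xi)q$ at $h=0$. Here
\[N(h) = \langle \OO, e^{-n\beta K^{h\xi}}\varphi\rangle \quad\text{and}\quad Z(h) = \int_W e^{-n\beta K^{h\xi}}\varphi \,\d w.\]
Since $f$ is fixed, $K^{h\xi}(w) = \int_X f(x,w)(1+h\xi(x))q(x)\,\d x = K(w) + h\,\Delta K(w)$. Thus $h \mapsto K^{h\xi}$ is affine, with derivative $\Delta K$, which is well defined by H\"older since $\xi\in L^4(q)$ and $f$ is $L^4(q)$-valued. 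Then $\frac{\d}{\d h} e^{-n\beta K^{h\xi}}\varphi = -n\beta\,\Delta K\, e^{-n\beta K^{h\xi}}\varphi$.

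The first step is to justify exchanging $\frac{\d}{\d h}$ with the pairings. For $Z$ this is dominated convergence on compact $W$. For $N$, write $\langle P\delta_S, \Phi\rangle = \int_S P^{\dagger}$-type expressions, i.e.\ $\int_S \sum_\alpha g_\alpha \dd^\alpha \Phi \,\d s$ (up to the sign convention for distributional derivatives). We need $\dd^\alpha_w$ of $e^{-n\beta(K+h\Delta K)}\varphi$ to be jointly smooth in $(h,w)$ with derivatives bounded uniformly on the compact $S$. This holds because $K$ and $\Delta K$ are real analytic: by Proposition \ref{derivative_of_Ls_real_analytic} and Proposition \ref{Ls_pointwise}, $\dd^\alpha f$ is $L^4(q)$-valued real analytic, so one may differentiate under the $x$-integral. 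Moreover, $h$-derivatives commute with $\dd^\alpha_w$. This gives
\[N'(0) = -n\beta\,\langle \OO, \Delta K\, e^{-n\beta K}\varphi\rangle, \qquad Z'(0) = -n\beta\int_W \Delta K\, e^{-n\beta K}\varphi \,\d w.\]

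The second step is the quotient rule:
\[\nabla_\xi \bb E(\OO) = \frac{N'}{Z} - \frac{N Z'}{Z^2} = -n\beta\bigl(\bb E_{\sPiPop}(\OO\cdot \Delta K) - \bb E_{\sPiPop}(\OO)\,\bb E_{\sPiPop}(\Delta K)\bigr).\]
Here $\OO\cdot\Delta K$ denotes the distribution $\Phi\mapsto\langle \OO, \Delta K\,\Phi\rangle$, i.e.\ multiplication of the distribution by the smooth function $\Delta K$. This is precisely how the covariance $\mr{Cov}_{\sPiPop}(\OO,\Delta K)$ of a distributional observable with a function should be read, so I would state this convention explicitly. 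Multiplying by $(n\beta)^{-(M+1)}$ then yields $-(n\beta)^{-M}\mr{Cov}_{\sPiPop}(\OO,\Delta K)$. The $h$-dependence of any coefficient $g_\alpha$ must also be addressed. The coefficients $g_\alpha$ are population expectations and could in principle vary with $q$. The definition of $\chi^{\mr{pop}}$ perturbs only the posterior weight, so I would fix $\OO$ (coefficients held at $q$) when taking $\nabla_\xi$, and note this explicitly.

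The main obstacle is the interchange of differentiation and pairing for differential observables. There, $\dd^\alpha$ acting on $\Delta K\, e^{-n\beta K}\varphi$ produces Leibniz terms involving derivatives of $\Delta K$. These terms must be kept inside the pairing rather than expanded. With the multiplication convention above they are, so the covariance identity holds exactly, without any Leibniz expansion.
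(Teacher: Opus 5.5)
Your proposal is correct and follows the paper's proof: you differentiate $N(h)/Z(h)$ at $h=0$, apply the quotient rule, and divide by $(n\beta)^{M+1}$. Your dominated-convergence justification of the interchange, and your explicit conventions (coefficients of $\OO$ frozen at $q$, $\OO\cdot\Delta K$ read as multiplication of the distribution by a smooth function), spell out what the paper leaves implicit.

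One small correction: $f$ is not literally fixed along the path. The perturbed log ratio is $f+\log(1+h\xi)$, so $K^{h\xi}=K+h\,\Delta K+C(h)$ with $C$ independent of $w$, and $K^{h\xi}$ is not affine in $h$. This changes nothing, since $C'(0)=\int \xi q=0$ and any $w$-independent term cancels in the normalized posterior.
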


\begin{proof}
Write $N(h) = \langle \OO, e^{-n\beta K^{h\xi}} \varphi \rangle$ and $Z(h) = \int_W e^{-n\beta K^{h\xi}} \varphi \d w$, so that $\bb E_{\Pi^{h\xi}}(\OO) = N(h)/Z(h)$.  Since $K^{h\xi}(w) = K(w) + h\int \xi(x) f(x,w) q(x) \d x + C(h)$ where $C(h)$ is independent of $w$, we have
\[\frac{\d}{\d h}\bigg|_{h=0} e^{-n\beta K^{h\xi}(w)}
= -n\beta \Big(\int \xi(x) f(x,w) q(x) \d x\Big) e^{-n\beta K(w)}.\]
Differentiating $N(h)/Z(h)$ by the quotient rule and evaluating at $h = 0$:
\begin{align*}
\nabla_\xi \bb E_{\sPiPop}(\OO)
&= -n\beta \Big[\bb E_{\sPiPop}\Big(\OO \cdot \textstyle\int \xi f q \d x\Big)
- \bb E_{\sPiPop}(\OO) \bb E_{\sPiPop}\Big(\textstyle\int \xi f q \d x\Big)\Big] \\
&= -n\beta \mr{Cov}_{\sPiPop}\Big(\OO, \textstyle\int \xi(x) f(x,\cdot) q(x) \d x\Big).
\end{align*}
Dividing by $(n\beta)^{M+1}$ gives
\[\chi^{\mr{pop}}_{n,\beta}(\OO,\xi) = -\frac{1}{(n\beta)^M}\mr{Cov}_{\sPiPop}\Big(\OO, \int \xi f q \d x\Big).\]
Finally, $\Delta K(w) = \int \xi(x) f(x,w) q(x) \d x$ up to $w$-independent terms.
\end{proof}

\begin{remark}
    The function $\chi^{\mr{pop}}_{n,\beta}(\OO)$ sending $\xi$ to $\chi^{\mr{pop}}_{n,\beta}(\OO,\xi)$ is linear and continuous, so defines a covector in the $L^2$ cotangent space $T_{q, L^2}^* \Delta(X)$.  We refer to it as the \emph{response covector}.
\end{remark}

\begin{remark}
Since $\Delta K(w) = \int \xi(x) f(x,w) q(x) \d x$ up to $w$-independent terms, and the covariance is linear in its second argument, the susceptibility decomposes as
\[\chi^{\mr{pop}}_{n,\beta}(\OO, \xi) = \int_X \xi(x) \chi_x(\OO) q(x) \d x\]
where the \emph{per-sample susceptibility} $\chi_x(\OO) = -\frac{1}{(n\beta)^M}\mr{Cov}_{\sPiPop}(\OO, f(x,\cdot) - K(\cdot))$ is the integral kernel of the response covector as a linear functional on $L^4_0(q)$.  The centering by $K$ is a choice of gauge: since $\int \xi q = 0$, adding a function of $w$ alone to the kernel does not change the integral.
\end{remark}

\begin{definition}
For each closed embedded submanifold $S \sub W$, define the \emph{restricted empirical posterior}
\[\Pi^{\mr{emp},S}_{n,\beta} = \frac 1{Z^{\mr{emp},S}_{n,\beta}} e^{-n\beta K_n|_S} \varphi|_S \d \mr{vol}_S, \qquad Z^{\mr{emp},S}_{n,\beta} = \int_S e^{-n\beta K_n|_S} \varphi|_S \d \mr{vol}_S.\]
Define the restricted population posterior $\Pi^{\mr{pop},S}_{n,\beta}$ and its partition function $Z^{\mr{pop},S}_{n,\beta}$ analogously with $K$ in place of $K_n$.
\end{definition}

Given $T$ samples $w_1, \ldots, w_T$ from $\Pi^{\mr{emp},S}_{n,\beta}$, write $\widehat{\bb E}_{S,n}$ for the sample mean and $\widehat{\mr{Cov}}_{S,n}$ for the sample covariance.  Similarly let $\widehat{\bb E}_{W,n}$ denote the sample mean over $T'$ exact samples from the full empirical posterior $\PiEmp$.

\begin{definition} \label{def:restricted}
For a functional observable $\OO = \sum_i g_i \delta_{S_i}$, the \emph{restricted covariance} under $\Pi = \PiPop$ or $\PiEmp$ (and similarly for the restricted posteriors $\Pi^{\mr{pop},S_i}_{n,\beta}$ or $\Pi^{\mr{emp},S_i}_{n,\beta}$) is
\[\mr{Cov}^{\mr{res}}_\Pi(\OO, \Delta K) = \sum_i \Big[\bb E_{\Pi^{S_i}_{n,\beta}}\big(g_i \Delta K\big|_{S_i}\big) - \bb E_{\Pi^{S_i}_{n,\beta}}\big(g_i\big|_{S_i}\big) \cdot \bb E_{\Pi_{n,\beta}}(\Delta K)\Big].\]
The extension to differential observables is given in Section \ref{sec:differential}.
\end{definition}

\section{Functional Observables} \label{sec:functional}

Throughout this section we consider functional observables $\OO = \sum_i g_i \delta_{S_i}$.

\subsection{Renormalized susceptibility and estimators}

\begin{definition}
For a functional observable $\OO = \sum_i g_i \delta_{S_i}$, the \emph{renormalized susceptibility} is
\[\chi^{\mr{pop,ren}}_{n,\beta}(\OO,\xi) = -\mr{Cov}^{\mr{res}}_{\sPiPop}(\OO, \Delta K).\]
\end{definition}

For a functional observable, the full-posterior covariance decomposes as
\[\mr{Cov}_{\Pi_{n,\beta}}(\OO, \Delta K) = \sum_i \frac{Z^{S_i}_{n,\beta}}{Z^W_{n,\beta}} \Big[\bb E_{\Pi^{S_i}_{n,\beta}}\big(g_i \Delta K\big|_{S_i}\big) - \bb E_{\Pi^{S_i}_{n,\beta}}\big(g_i\big|_{S_i}\big) \cdot \bb E_{\Pi_{n,\beta}}(\Delta K)\Big].\]
An ideal estimator would include the empirical partition function ratios $Z^{\mr{emp},S_i}_{n,\beta}/Z^{\mr{emp}}_{n,\beta}$ and target the full susceptibility $\chi^{\mr{pop}}_{n,\beta}(\OO,\xi)$.

In the empirical susceptibility estimator we replace $K$ by $K_n$ in the posterior and each coefficient $g_i$ by its empirical counterpart $g_{i,n}$.  For deterministic coefficients $g_{i,n} = g_i$.

\begin{definition}
    For a functional observable $\OO = \sum_i g_i \delta_{S_i}$, the \emph{ideal susceptibility estimator} is
    \[\widehat\chi_{n,\beta}^{\mr{ideal}}(\OO, \xi) = -\sum_i \frac{Z^{\mr{emp},S_i}_{n,\beta}}{Z^{\mr{emp}}_{n,\beta}} \Big[\widehat{\bb E}_{S_i,n}\big(g_{i,n} \Delta K_n\big|_{S_i}\big) - \widehat{\bb E}_{S_i,n}\big(g_{i,n}\big|_{S_i}\big) \cdot \widehat{\bb E}_{W,n}(\Delta K_n)\Big]\]
    where $\widehat{\bb E}_{S_i,n}$ is the sample mean over $T$ exact draws from $\Pi^{\mr{emp},S_i}_{n,\beta}$, $\widehat{\bb E}_{W,n}$ is the sample mean over $T$ exact draws from $\PiEmp$, and $\Delta K_n$ is the empirical counterpart of $\Delta K$:
    \[\Delta K_n(w) = \frac{1}{n}\sum_{j=1}^n \xi(x_j) f(x_j, w).\]
\end{definition}

In practice the partition function ratios are not estimated.  Following \cite{lang2}, one instead estimates the \emph{restricted} quantity obtained by dropping the prefactors.

\begin{definition}
The \emph{renormalized susceptibility estimator} is
    \[\widehat\chi_{n,\beta}^{\mr{ren}}(\OO, \xi) = -\sum_i \Big[\widehat{\bb E}_{S_i,n}\big(g_{i,n} \Delta K_n\big|_{S_i}\big) - \widehat{\bb E}_{S_i,n}\big(g_{i,n}\big|_{S_i}\big) \cdot \widehat{\bb E}_{W,n}(\Delta K_n)\Big].\]
In the limit $T \to \infty$ the sample means converge to exact posterior expectations.  Note that the renormalized susceptibility depends in general on the chosen decomposition $\OO = \sum_i g_i \delta_{S_i}$.
\end{definition}

The two forms are related by
\begin{equation} \label{eq:pop_vs_ren}
\chi^{\mr{pop}}_{n,\beta}(\OO,\xi) = \sum_i \frac{Z^{\mr{pop},S_i}_{n,\beta}}{Z^{\mr{pop}}_{n,\beta}} \cdot (\text{$i$-th summand of } \chi^{\mr{pop,ren}}_{n,\beta}(\OO,\xi)).
\end{equation}
For observables supported on $W$ itself (i.e.\ $S_i = W$ for all $i$) the two coincide.

\begin{remark} \label{rmk:three_layers}
    The renormalized estimator $\widehat\chi_{n,\beta}^{\mr{ren}}$ involves three layers of approximation relative to $\chi^{\mr{pop,ren}}_{n,\beta}(\OO,\xi)$:
    \begin{enumerate}
        \item replacing the population posterior $\PiPop$ by the empirical posterior $\PiEmp$ (and similarly for restricted posteriors),
        \item replacing coefficients $g_i$ by their empirical counterparts $g_{i,n}$,
        \item replacing posterior expectations by sample means from $T$ draws.
    \end{enumerate}
    In the limit $T \to \infty$, layer (3) vanishes.  Layers (1) and (2) are controlled by the dependence on $n$.
\end{remark}

\begin{remark}
In practice one z-scores the susceptibility estimators before downstream analysis \cite{lang3}, and this standardization removes the partition function ratios of \eqref{eq:pop_vs_ren} exactly when the observables in the family share a common submanifold of support, i.e., when $\OO_1, \ldots, \OO_J$ are observables of the form $\OO_j = g_j \delta_S$ all supported on the same submanifold $S$.  In this case the relation between renormalized and full susceptibilities reduces to a single multiplicative factor $\chi^{\mr{pop}}_{n,\beta}(\OO_j, \xi) = (Z^{\mr{pop},S}_{n,\beta}/Z^{\mr{pop}}_{n,\beta}) \cdot \chi^{\mr{pop,ren}}_{n,\beta}(\OO_j, \xi)$ that is independent of $j$.  This factor appears identically in every entry of the family $\{\chi^{\mr{pop,ren}}_{n,\beta}(\OO_j, \xi)\}_{j}$ and in their standard deviation and so is eliminated in the z-scoring.
\end{remark}

\subsection{SGLD-based estimation} \label{SGLD_section}

In practice we cannot genuinely sample from $\Pi^{\mr{emp},S}_{n,\beta}$.  As a replacement we use stochastic gradient Langevin dynamics (SGLD) \cite{wellingteh2011} to approximate samples from the restricted posterior.  The localized posterior on a submanifold $S \sub W$ centered at $w^* \in S$ is
\[p(w; w^*, \beta, \gamma) \propto \exp\big(-n\beta K_n(w) - \tfrac \gamma 2 \|w - w^*\|^2\big)\big|_S\]
where $\gamma > 0$ is the localization strength.  Fix a step size $\eps > 0$ and generate, for each submanifold $S_i$ appearing in the decomposition of our observable, an SGLD chain $w_1^{(i)}, \ldots, w_T^{(i)}$ on $S_i$ with transition
\[w_{t+1}^{(i)} = w_t^{(i)} - \frac \eps 2 \nabla_{S_i}\big(n\beta K_n + \tfrac \gamma 2 \|w - w^*\|^2\big)(w_t^{(i)}) + \sqrt{\eps} \zeta_t, \qquad \zeta_t \sim N(0, \mr{Id}_{T_{w_t}S_i})\]
where $\nabla_{S_i}$ denotes the gradient along $S_i$.  Write $\widehat{\bb E}_{S_i,n}^{\mr{SGLD}}$ for the sample mean over these chains (after discarding burn-in), and similarly $\widehat{\bb E}_{W,n}^{\mr{SGLD}}$ for the full posterior.

\begin{definition}
    For a functional observable $\OO = \sum_i g_i \delta_{S_i}$, the \emph{SGLD susceptibility estimator} is
    \[\widehat\chi_{n,\beta}^{\mr{SGLD}}(\OO, \xi) = -\sum_i \left(\widehat{\bb E}_{S_i,n}^{\mr{SGLD}}\big(g_{i,n} \Delta K_n\big|_{S_i}\big) - \widehat{\bb E}_{S_i,n}^{\mr{SGLD}}\big(g_{i,n}\big|_{S_i}\big) \cdot \widehat{\bb E}_{W,n}^{\mr{SGLD}}(\Delta K_n)\right).\]
    This targets the renormalized susceptibility $\chi^{\mr{pop,ren}}_{n,\beta}(\OO,\xi)$; we do not attempt to estimate the partition function ratios in the present work.
\end{definition}

\begin{remark}
    The SGLD estimator $\widehat\chi_{n,\beta}^{\mr{SGLD}}$ introduces a fourth layer of approximation beyond those of the renormalized susceptibility estimator: the finite step size $\eps$, finite chain length $T$, and correlation between successive samples.  This layer is orthogonal to the population-to-empirical bridge and is not addressed in these notes.
\end{remark}

\begin{remark}
    Unlike the renormalized susceptibility estimator $\widehat\chi_{n,\beta}^{\mr{ren}}$, the SGLD estimator may depend on the choice of submanifold decomposition even in the large-$T$ limit.  Different decompositions $\OO = \sum_i g_i \delta_{S_i} = \sum_j g'_j \delta_{S'_j}$ require SGLD chains on different submanifolds, with different mixing rates, step size requirements, and discretization errors.  While both decompositions target the same renormalized population quantity $\chi^{\mr{pop,ren}}_{n,\beta}(\OO,\xi)$, the SGLD bias may differ, so the estimators need not agree at any finite $\eps$.
\end{remark}

\subsection{Convergence} \label{subsec:convergence_functional}

\subsubsection{Coupling kernels}

We will study the relationship between a population level susceptibility and its estimators for all perturbations at once using an intermediate step, in which we represent the susceptibility as the pairing of the perturbation against a kernel on the data manifold $X$.  Our approach generalizes the \emph{loss kernel} introduced in \cite{sing2}, although our kernels -- depending on a choice of observable -- will be asymmetric operators.

\begin{definition}[Coupling kernel] \label{def:coupling_kernel}
For a functional observable $\OO = g \delta_S$ with lift $\wt g$, the \emph{population coupling kernel} on $S$ is the hybrid kernel
\[\kappa^\OO_{n,\beta}(x,x') = \bb E_{\Pi^{\mr{pop},S}_{n,\beta}}[\wt g(x,w) f(x',w)] - \bb E_{\Pi^{\mr{pop},S}_{n,\beta}}[\wt g(x,w)] \cdot \bb E_{\sPiPop}[f(x',w)],\]
where the $\wt g$ factor is integrated under the restricted posterior $\Pi^{\mr{pop},S}_{n,\beta}$ and the $f$ factor in the cross term under the full posterior $\PiPop$.  The \emph{empirical coupling kernel} is defined analogously:
\[\widehat \kappa^\OO_{n,\beta}(x,x') = \bb E_{\Pi^{\mr{emp},S}_{n,\beta}}[\wt g(x,w) f(x',w)] - \bb E_{\Pi^{\mr{emp},S}_{n,\beta}}[\wt g(x,w)] \cdot \bb E_{\sPiEmp}[f(x',w)].\]
When $S = W$ both reduce to the ordinary posterior covariance $\mr{Cov}_{w \sim \Pi_{n,\beta}}(\wt g(x,w), f(x',w))$.
\end{definition}

\begin{definition}[Standard form] \label{def:sigma}
In standard form \cite[\S5.1]{WatanabeGreen}, suppose that we can write $f(x,u) = u^k a(x,u)$, $K(u) = u^{2k}$, $\varphi(u) = |u^h|b(u)$, $g(u) = u^{2j}$, and $\wt g(x,u) = u^j a_g(x,u)$, where $u$ is a local coordinate on a submanifold $S \sub W$.  For a multi-index $l \ge 0$ define
\[\lambda_l = \min_i \frac{l_i + h_i + 1}{2k_i}.\]
The RLCT is $\lambda = \lambda_0$.  We write $\sigma = \lambda_{j+k} - \lambda$ for the shift due to the insertion $w^{j+k}$.  When $j = k$ one has $\sigma = 1$.
\end{definition}

The coupling kernels depend on the parameters $n,\beta$ and we will study their relationship in the large $n\beta$ limit.  In \cite{sing2} an alternative limiting process is studied in terms of the sublevel sets of the loss of height $\eps$, in the $\eps \to 0$ limit.  These are closely related but not identical even in the limit.  We include the analogue of their definition here for reference.

\begin{definition}
The \emph{sharp-cutoff coupling kernel} on $S$ is
\[\kappa^\OO_\eps(x,x') = \eps^{-\sigma} \big[\bb E_{U(S_\eps)}[\wt g(x,w) f(x',w)] - \bb E_{U(S_\eps)}[\wt g(x,w)] \cdot \bb E_{U(W_\eps)}[f(x',w)]\big],\]
where $S_\eps = \{w \in S \colon K|_S(w) \le \eps\}$, $W_\eps = \{w \in W \colon K(w) \le \eps\}$, $U(\cdot)$ denotes the uniform distribution on its argument, and $\sigma$ is as in Definition \ref{def:sigma}.  When $S = W$ and $\sigma = 1$ this reduces to the sharp-cutoff loss kernel of \cite{sing2}.
\end{definition}

\subsubsection{From Kernels to Susceptibilities}

The coupling kernel controls the restricted covariance.  For a functional observable $\OO = g \cdot \delta_S$, expanding $g(w) = \int_X \wt g(x,w) q(x) \d x$ and $\Delta K(w) = \int_X \xi(x') f(x',w) q(x') \d x'$ in the definition of the restricted covariance (Definition \ref{def:restricted}) gives
\[\mr{Cov}^{\mr{res}}_{\sPiPop}(\OO, \Delta K) = \int_{X^2} \kappa^\OO_{n,\beta}(x,x') q(x) \xi(x') q(x') \d x \d x'.\]
The empirical counterpart $\Delta K_n$ differs from $\Delta K$ by an empirical process:
\[\Delta K_n(w) - \Delta K(w) = \frac{1}{\sqrt{n}} \psi_n^\xi(w),\]
where $\psi_n^\xi(w) = \frac{1}{\sqrt{n}}\sum_j [\xi(x_j)f(x_j,w) - \Delta K(w)]$.
Since $f$ is $L^4(q)$-valued real analytic and $\xi \in L^4(q)$, H\"older's inequality gives $\|\xi \cdot a_\alpha\|_2 \le \|\xi\|_4 \|a_\alpha\|_4$ for each Taylor coefficient $a_\alpha$ of $f$, so $\xi f$ is $L^2(q)$-valued real analytic in $w$.  Applying \cite[Theorem 5.8]{WatanabeGrey} gives $\sup_w |\Delta K_n(w) - \Delta K(w)| = O_p(1/\sqrt{n})$.

\begin{lemma} \label{kernel_to_susceptibility}
Suppose that $\widehat \kappa^\OO_{n,\beta}(x,x') \to \kappa^\OO_{n,\beta}(x,x')$ in probability for $q \otimes q$-almost all $(x,x') \in X^2$, and that there exists a function $M(x,x')$ with $\max(|\widehat \kappa^\OO_{n,\beta}(x,x')|, |\kappa^\OO_{n,\beta}(x,x')|) \le M(x,x')$ almost surely and
\[\int_{X^2} M(x,x') q(x) |\xi(x')| q(x') \d x \d x' < \infty.\]  Then
\[\mr{Cov}^{\mr{res}}_{\sPiEmp}(\OO_n, \Delta K) \to \mr{Cov}^{\mr{res}}_{\sPiPop}(\OO, \Delta K)\]
in probability as $n \to \infty$.
\end{lemma}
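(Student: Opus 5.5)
The plan is to write both restricted covariances as pairings of a coupling kernel against measures on $X^2$, and then use dominated convergence to move the pointwise convergence of $\widehat\kappa^\OO_{n,\beta}$ inside the integral. It suffices to treat $\OO = g\delta_S$, since the general case follows by summing over $i$. Expanding $g_n(w) = \frac1n\sum_j \wt g(x_j,w)$ and $\Delta K(w) = \int \xi(x')f(x',w)q(x')\d x'$ in Definition \ref{def:restricted} under the empirical posteriors gives
\[\mr{Cov}^{\mr{res}}_{\sPiEmp}(\OO_n,\Delta K) = \frac1n\sum_{j=1}^n \widehat\Phi_n(x_j), \qquad \widehat\Phi_n(x) := \int_X \widehat\kappa^\OO_{n,\beta}(x,x')\,\xi(x')q(x')\d x'.\]
Interchanging the finite posterior expectations with the $x'$-integral is justified by Fubini, using $\sup_w|f(\cdot,w)|\in L^4(q)$ (Proposition \ref{Ls_pointwise}) and $\xi\in L^4(q)$. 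Similarly, $\mr{Cov}^{\mr{res}}_{\sPiPop}(\OO,\Delta K) = \int_X \Phi(x)q(x)\d x$ with $\Phi(x) = \int \kappa^\OO_{n,\beta}(x,x')\xi(x')q(x')\d x'$.

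I then split the difference into two terms:
\[\frac1n\sum_j\big[\widehat\Phi_n(x_j)-\Phi(x_j)\big] \;+\; \Big[\frac1n\sum_j \Phi(x_j) - \int\Phi\, q\Big].\]
For the second term, the domination hypothesis gives $|\Phi(x)|\le \int M(x,x')|\xi(x')|q(x')\d x' =: \bar M(x)$ with $\bar M\in L^1(q)$. Hence $\Phi\in L^1(q)$ and the weak law of large numbers applies. If $\Phi$ depends on $n\beta$, I would instead use the uniform bound $\bar M$ together with a truncation/Chebyshev argument, which is a triangular-array LLN dominated by a fixed integrable function.

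For the first term, the pointwise step comes first. For $q$-a.e.\ $x$, $\widehat\kappa^\OO_{n,\beta}(x,\cdot)\to\kappa^\OO_{n,\beta}(x,\cdot)$ in probability, dominated by $M(x,\cdot)$, which is integrable against $|\xi|q$. Dominated convergence in probability (via the subsequence criterion: every subsequence has a further a.s.-convergent subsequence, applied jointly through Fubini on $\Omega\times X$) then gives $\widehat\Phi_n(x)\to\Phi(x)$ in probability, with $|\widehat\Phi_n - \Phi|\le 2\bar M$. Applying the same argument on $\Omega\times X$ yields $\int \bb E\big[\min(1,|\widehat\Phi_n-\Phi|)\big](x)\,q(x)\d x \to 0$, i.e.\ the convergence also holds when $x$ is integrated against $q$.

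The main obstacle is that the first term evaluates $\widehat\Phi_n$ at the data points $x_j$, which are \emph{not} independent of the empirical posterior defining $\widehat\Phi_n$. So the convergence obtained by integrating $x$ against $q$ does not transfer directly to the empirical average. I would handle this with a leave-one-out comparison. Let $\widehat\Phi_n^{(-j)}$ be defined using the posterior built from the sample with $x_j$ replaced by an independent copy. Then by exchangeability
\[\bb E\Big|\tfrac1n\textstyle\sum_j (\widehat\Phi^{(-j)}_n-\Phi)(x_j)\Big|\wedge 1 \le \int \bb E\big[\min(1,|\widehat\Phi_n-\Phi|)\big](x)\,q(x)\d x\to 0.\]
It remains to control $\widehat\Phi_n - \widehat\Phi^{(-j)}_n$. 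The two posterior densities differ by the factor $\exp\big(-\beta[f(x_j,w)-f(\tilde x_j,w)]\big)$, so their likelihood ratio lies in $\exp(\pm 2\beta\,\sup_w|f(x_j,\cdot)|)$ up to normalisation. I would bound the resulting change in posterior expectations by $\big(e^{4\beta F(x_j)}-1\big)\cdot(\text{envelope})$, where $F=\sup_w|f|\in L^4(q)$. Averaged over $j$ this is $O_p(\beta)$ by Hölder, hence $o_p(1)$ in the regime $\beta\to 0$ of Theorem \ref{main_theorem}. This step uses the coupled-limit regime, and the $L^4$ integrability is what makes the Hölder pairing of $F$, $\xi$ and $\wt g$ close. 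Combining the two terms gives the claimed convergence in probability.
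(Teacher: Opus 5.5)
Your decomposition pivots through $\frac1n\sum_j\Phi(x_j)$ (population posterior, empirical coefficient). The paper instead pivots through $\mr{Cov}^{\mr{res}}_{\sPiEmp}(\OO,\Delta K)=\int_X\widehat\Phi_n\,q\,\d x$ (empirical posterior, population coefficient). With that pivot, one piece is $\int_X(\widehat\Phi_n-\Phi)\,q\,\d x$, which is handled by the same subsequence/dominated-convergence argument you give. The other piece is $\mr{Cov}^{\mr{res}}_{\sPiEmp}((g_n-g)\delta_S,\Delta K)$. Since the restricted covariance is linear in its coefficient, this is at most $2\sup_{w\in S}|g_n(w)-g(w)|\cdot\sup_W|\Delta K|=O_p(n^{-1/2})$ by Watanabe's empirical process bound \cite[Theorem 5.8]{WatanabeGrey}, which applies because coefficient lifts are $L^4(q)$-valued real analytic by the definition of an observable. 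That bound is uniform in $w$, so the dependence of the posterior on $x_1,\ldots,x_n$ is irrelevant. The obstacle you identify never arises, and no assumption on $\beta$ is used.

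Your pivot creates that obstacle, and the leave-one-out repair fails as written. The envelope $F=\sup_{w\in W}|f(\cdot,w)|$ is only in $L^4(q)$, so $e^{c\beta F}$ need not have any finite moments, and H\"older cannot give $O_p(\beta)$. Your bound can in fact diverge. Suppose $F$ has a polynomial tail and $G=\sup_{w\in S}|\wt g(\cdot,w)|$ is bounded below. Then $\max_j F(x_j)$ grows like a power of $n$, and for $\beta=1/\log n$ the single largest term $\frac1n(e^{4\beta F(x_j)}-1)G(x_j)$ tends to infinity. Your likelihood ratio should also involve $F(\tilde x_j)$, not just $F(x_j)$.

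The step can be repaired by capping. Posterior expectations are bounded by the envelope whatever the reweighting, so $|\widehat\Phi_n(x_j)-\widehat\Phi^{(-j)}_n(x_j)|\le C\,G(x_j)\min\bigl(1,e^{2\beta(F(x_j)+F(\tilde x_j))}-1\bigr)$, with $G\in L^1(q)$ and $C$ depending on $\sup_W|\Delta K|$. By dominated convergence the average of these terms is $o_p(1)$ as $\beta\to0$, not $O_p(\beta)$.

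Separately, the inequality $\min(1,|\frac1n\sum_j a_j|)\le\frac1n\sum_j\min(1,|a_j|)$ is false in general (take $a_1=n$ and $a_j=0$ otherwise). You do not need it. Drop the truncation and use $|\widehat\Phi_n-\Phi|\le2\bar M$ with $\bar M\in L^1(q)$ to get $\int_X\bb E|\widehat\Phi_n-\Phi|\,q\,\d x\to0$, which gives $L^1$ convergence of the leave-one-out average directly.

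Even after these repairs, your argument needs $\beta\to0$, which the lemma does not assume. You would prove only the special case needed in Theorem \ref{convergence_zeroth_order}, not the lemma as stated. The missing idea is to handle the replacement of $g$ by $g_n$ uniformly in $w$ rather than pointwise at the data points.
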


\begin{proof}
Decompose the difference at the covariance level:
\begin{align*}
\mr{Cov}^{\mr{res}}_{\sPiEmp}(\OO_n, \Delta K) - \mr{Cov}^{\mr{res}}_{\sPiPop}(\OO, \Delta K)
&= \Big[\mr{Cov}^{\mr{res}}_{\sPiEmp}(\OO_n, \Delta K) - \mr{Cov}^{\mr{res}}_{\sPiEmp}(\OO, \Delta K)\Big] \\
&\quad + \Big[\mr{Cov}^{\mr{res}}_{\sPiEmp}(\OO, \Delta K) - \mr{Cov}^{\mr{res}}_{\sPiPop}(\OO, \Delta K)\Big].
\end{align*}
The first summand isolates the effect of replacing the population coefficient $g$ by its empirical counterpart $g_n$, while the second isolates the effect of replacing the population posterior by the empirical posterior.

For the second summand, the coefficient $g(w) = \int_X \wt g(x,w) q(x) \d x$ is deterministic, so the restricted covariance factors through the coupling kernel integrated against $q(x) \xi(x') q(x') \d x \d x'$:
\[\mr{Cov}^{\mr{res}}_{\sPiEmp}(\OO, \Delta K) - \mr{Cov}^{\mr{res}}_{\sPiPop}(\OO, \Delta K) = \int_{X^2} (\widehat\kappa^\OO_{n,\beta}(x,x') - \kappa^\OO_{n,\beta}(x,x')) q(x) \xi(x') q(x') \d x \d x'.\]
This converges to zero in probability by the pointwise kernel convergence assumption and the dominated convergence theorem: the integrand is bounded by $2M(x,x') q(x) |\xi(x')| q(x')$, which is integrable, and any subsequence has a further subsequence along which $\widehat\kappa^\OO - \kappa^\OO \to 0$ almost surely (since convergence in probability implies almost sure convergence along a subsequence), so the ordinary dominated convergence theorem applies to that sub-subsequence.

For the first summand, the restricted covariance is linear in its first coefficient, so we must bound the restricted covariance $\mr{Cov}^{\mr{res}}_{\sPiEmp}(\OO_n - \OO, \Delta K)$.  Since $\wt g(x,w)$ is $L^2(q)$-valued real analytic in $w$ we may apply empirical process bound \cite[Theorem 5.8]{WatanabeGrey}.  This bound tells us that there exists a constant $C$ so that
\[\bb E(\sup_{w \in S} n |g_n(w) - g(w)|^2) \le C.\]
By Markov's inequality we conclude that  $\sup_{w \in S} |g_n(w) - g(w)| = O_p(1/\sqrt{n})$.  The restricted covariance with coefficient $g_n - g$ is therefore bounded by an expression of order $O_p(1/\sqrt{n})$ times the posterior expectation of $|\Delta K|$, which is $O(1)$.  Hence this summand also converges to zero in probability as $n \to \infty$.
\end{proof}

\begin{remark} \label{rmk:domination}
    The domination condition is automatic given our assumptions.  Cauchy--Schwarz gives
    \[|\kappa^{\OO}_{n,\beta}(x,x')| \le \sup_{w \in S} |\wt g(x,w)| \cdot \sup_{w \in W} |f(x',w)|\]
    and likewise for $\widehat \kappa^\OO_{n,\beta}$.  Taking $M(x,x') = \sup_{w \in S} |\wt g(x,w)| \cdot \sup_{w \in W} |f(x',w)|$, the integral $\int_{X^2} M\, q(x) |\xi(x')| q(x') \d x \d x'$ factorizes by Fubini as
    \[\left(\int_X \sup_{w \in S} |\wt g(x,w)| q(x) \d x\right) \cdot \left(\int_X \sup_{w \in W} |f(x',w)| |\xi(x')| q(x') \d x'\right).\]
    The first factor is finite since $\sup_{w \in S} |\wt g(\cdot, w)| \in L^4(q) \sub L^1(q)$ by Proposition \ref{Ls_pointwise}.  For the second factor, $\sup_{w \in W} |f(\cdot,w)| \in L^4(q)$ by Proposition \ref{Ls_pointwise}, so since $\xi \in L^4(q)$, H\"older's inequality gives $\int \sup_w |f(x',w)| |\xi(x')| q(x') \d x' \le \|\sup_w |f|\|_{L^4(q)} \|\xi\|_{L^4(q)} < \infty$.
\end{remark}

\begin{remark} \label{rmk:holder_tradeoff}
    The arguments above require that $\xi f$ is $L^2(q)$-valued real analytic.  By H\"older's inequality this holds whenever $\xi \in L^a(q)$ and $f$ is $L^b(q)$-valued real analytic with $\frac{1}{a} + \frac{1}{b} = \frac{1}{2}$.  Our standing assumption (Assumption \ref{standing_assumption}) takes $b = 4$, requiring $a = 4$, i.e.\ $\xi \in L^4_0(q)$.  An alternative is to assume $f$ is only $L^2(q)$-valued real analytic and require $\xi \in L^\infty(q)$; this is the other extreme of the same H\"older tradeoff.  Any pair $(a,b)$ with $a,b > 2$ and $\frac{1}{a} + \frac{1}{b} = \frac{1}{2}$ suffices for consistency.  For asymptotic unbiasedness (Theorem \ref{asymptotic_unbiasedness}) one needs $\xi f$ to be $L^4(q)$-valued real analytic, which requires the stronger condition $\frac{1}{a} + \frac{1}{b} = \frac{1}{4}$.
\end{remark}

\subsubsection{Moment lemmas} \label{moment_lemma_section}

In this section we will prove two lemmas by which we may explicitly analyze the asymptotic behavior of the moments appearing in first the population, then the empirical coupling kernels.  Our argument in this section follows an approach communicated to us by Zach Furman, to appear in a forthcoming update to \cite[Appendix A]{sing2}.  In the next subsection we will use these lemmas to directly compare the population and empirical coupling kernels, and therefore the population level susceptibility and its empirical estimator for a functional observable.

\begin{lemma} \label{population_moment_scaling}
Using the notation of Definition \ref{def:sigma}, let $\psi(x,w) = w^l c(x,w)$ where $l \ge 0$ is a multi-index and $c$ is real analytic with $c(x,0) \ne 0$ for $q$-a.e.\ $x$.  Write $\tau = \lambda_l - \lambda$ and let $m_l$ be the multiplicity of $\lambda_l$.  Then
\[\bb E_{w \sim \sPiPop}[\psi(x,w)] = C_l (n\beta)^{-\tau} (\log(n\beta))^{m_l - m} \bb E_{w \sim D_l}[c(x,w)] + o\big((n\beta)^{-\tau} (\log(n\beta))^{m_l - m}\big),\]
where $D$ is Watanabe's asymptotic distribution \cite[Remark 33]{WatanabeGreen}, $D_l$ is the asymptotic distribution for the weight $w^{l+h}$, $\bb E_{w \sim D_l}$ denotes the normalized expectation $\int c D_l \d w / \int D_l \d w$, and
\[C_l = \frac{\Gamma(\lambda_l)}{\Gamma(\lambda)}\cdot\frac{\int D_l(w) \d w}{\int D(w) \d w}\]
is independent of $n,\beta$.  When $m_l = m$ the log factor is absent.
\end{lemma}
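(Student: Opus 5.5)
We write the expectation as a ratio of two Laplace-type integrals and compute the asymptotics of numerator and denominator separately, using Watanabe's theory of the state density. After resolution of singularities, cover the support by finitely many standard-form charts; by a partition of unity both integrals become finite sums of local integrals. The charts achieving the minimal pair (smallest $\lambda_l$, largest multiplicity) dominate, and the rest contribute lower order. On a single chart, the denominator is
\[Z^{\mr{pop}}_{n,\beta} = \int e^{-n\beta u^{2k}} |u^h| b(u) \d u .\]
By the Mellin transform / zeta-function argument of \cite[Theorem 6.7]{WatanabeGrey} (equivalently \cite[Remark 33]{WatanabeGreen}),
\[Z^{\mr{pop}}_{n,\beta} = \Gamma(\lambda)(n\beta)^{-\lambda}(\log n\beta)^{m-1}\int D(u)\d u\,(1+o(1)).\]
Here $D$ is supported on the face $\{u_i = 0 : i \text{ attains the minimum}\}$ and is built from $b$ restricted to that face.

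The numerator is $\int \psi(x,u) e^{-n\beta u^{2k}}|u^h| b(u)\d u$. Up to sign conventions on orthants, this is the same type of integral with the monomial weight $u^{l+h}$ and amplitude $c(x,u)b(u)$. The same asymptotic theorem, applied with the shifted zeta function $\zeta_l(z) = \int u^{2kz}u^{l+h}cb\d u$ whose largest pole is at $-\lambda_l$ with order $m_l$, gives
\[\Gamma(\lambda_l)(n\beta)^{-\lambda_l}(\log n\beta)^{m_l-1}\int c(x,u)D_l(u)\d u\,(1+o(1)).\]
To apply this we use that $c(x,\cdot)$ is analytic for $q$-a.e.\ $x$ (Proposition \ref{Ls_pointwise}). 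Dividing the two asymptotics yields the exponent $-(\lambda_l-\lambda)=-\tau$, the log power $m_l-m$, and the constant $C_l\,\bb E_{D_l}[c]$. The hypothesis $c(x,0)\neq 0$ guarantees that the leading coefficient $\int cD_l$ need not vanish, so the statement is a genuine leading-order term. Where the asymptotic theorem needs a positive amplitude, we split $c$ into positive and negative parts, or note that the Mellin argument is linear in the amplitude.

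The main obstacle is bookkeeping for the error term. First, the $o(\cdot)$ remainder in the numerator must remain $o$ after division; this is fine because the denominator's leading term is nonzero, since $b>0$. Second, we need the relative error uniform enough for the statement, which is pointwise in $x$: the remainder comes from the next pole of $\zeta_l$ or lower log powers, which is $O((n\beta)^{-\lambda_l}(\log n\beta)^{m_l-2})$ or smaller, and this suffices. Third, in the global gluing, the maximal-order pole of the numerator must be taken over all charts together. Charts where the insertion $u^l$ does not realize $\lambda_l$ contribute strictly lower order, so $D_l$ is defined as the sum of the contributions of the charts realizing $(\lambda_l,m_l)$; the same is done for $D$. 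The identity $\int e^{-t v}v^{z-1}\d v = \Gamma(z)t^{-z}$ produces the $\Gamma$ factors, and they appear in $C_l$ as the ratio $\Gamma(\lambda_l)/\Gamma(\lambda)$.
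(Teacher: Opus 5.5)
Your proposal is correct and follows essentially the same route as the paper. Both arguments write the posterior expectation as a ratio of Laplace-type integrals in standard form, extract the leading asymptotics $\Gamma(\lambda_l)(n\beta)^{-\lambda_l}(\log n\beta)^{m_l-1}\int c\,D_l$ and $\Gamma(\lambda)(n\beta)^{-\lambda}(\log n\beta)^{m-1}\int D$ from Watanabe's singular-integral theory, and divide. The only difference is cosmetic: the paper goes through the state density $\delta(t/(n\beta)-w^{2k})$ and Watanabe's Theorem 4.7, then integrates against $e^{-t}$ to produce the Gamma factors, whereas you read the same leading term directly off the largest pole of the zeta function.
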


\begin{proof}
The expectation is
\[\bb E_{\sPiPop}[\psi] = \frac{\int w^{l+h} c(x,w) b(w) e^{-n\beta w^{2k}} \d w}{\int |w^h| b(w) e^{-n\beta w^{2k}} \d w}.\]
We identify the numerator with
\[\int_0^\infty \frac{\d t}{n\beta}\int \delta\Big(\frac{t}{n\beta} - w^{2k}\Big) w^{l+h} c(x,w) b(w) e^{-t} \d w.\]
By \cite[Theorem 4.7]{WatanabeGrey} the inner integral is $(t/(n\beta))^{\lambda_l - 1}(\log(n\beta/t))^{m_l-1}(\int D_l(w) c(x,w) \d w + o(1))$, where $D_l$ is the asymptotic distribution for the weight $w^{l+h}$.  Substituting and using $(\log(n\beta/t))^{m_l-1} = (\log(n\beta))^{m_l-1} + O((\log(n\beta))^{m_l-2})$, the numerator is
\[\frac{(\log(n\beta))^{m_l-1}}{(n\beta)^{\lambda_l}}\Gamma(\lambda_l)\int D_l(w) c(x,w) \d w + o\Big(\frac{(\log(n\beta))^{m_l-1}}{(n\beta)^{\lambda_l}}\Big).\]
The same argument applied to the denominator (with $l = 0$ and $c = 1$) gives
\[\frac{(\log(n\beta))^{m-1}}{(n\beta)^{\lambda}}\Gamma(\lambda)\int D \d w + o\Big(\frac{(\log(n\beta))^{m-1}}{(n\beta)^{\lambda}}\Big).\]
Dividing these two expressions yields the claim.
\end{proof}

The following lemma is the empirical counterpart of Lemma \ref{population_moment_scaling}, generalizing Furman's argument to insertions $w^l$ that need not be proportional to $k$.

\begin{lemma} \label{empirical_moment_scaling}
Using the notation of Definition \ref{def:sigma} and Lemma \ref{population_moment_scaling}, let $\psi(x,w) = w^l c(x,w)$ and write $\tau = \lambda_l - \lambda$.  Define the \emph{generalized renormalized posterior}
\[D^{l}_{\zeta_n}(w) = \frac{D_l(w) S_{\lambda_l}(\zeta_n(w))}{\int D(w) S_\lambda(\zeta_n(w)) \d w},\]
where $S_\alpha(a) = \int_0^\infty t^{\alpha - 1}\exp(-t + a\sqrt{\beta t}) \d t$ is the fluctuation function.  Then under the conditions of \cite[Theorem 10]{WatanabeGreen},
\[\bb E_{w \sim \sPiEmp}[\psi(x,w)] = (n\beta)^{-\tau} (\log(n\beta))^{m_l - m} \bb E_{w \sim D^{l}_{\zeta_n}}[c(x,w)] + o_p\big((n\beta)^{-\tau} (\log(n\beta))^{m_l - m}\big).\]
\end{lemma}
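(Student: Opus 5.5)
The plan mirrors the proof of Lemma \ref{population_moment_scaling}, with the population loss replaced by the empirical loss written in Watanabe's standard form. In the local coordinate $u$ on a chart of the resolution, write
\[n\beta K_n(w) = n\beta w^{2k} - \sqrt{n}\,\beta\, w^k \xi_n(w),\]
where $\xi_n(w) = \frac{1}{\sqrt n}\sum_j (w^k - a(x_j,w))$ is the empirical process attached to $f = w^k a$. Both numerator and denominator of $\bb E_{\sPiEmp}[\psi]$ then take the form
\[\int w^{l+h} c(x,w) b(w) \exp\big(-n\beta w^{2k} + \beta\sqrt{n}\, w^k \xi_n(w)\big) \d w,\]
with $l=0$, $c=1$ in the denominator. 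Following the proof of Lemma \ref{population_moment_scaling}, I would insert $1 = \int_0^\infty \delta(t/(n\beta) - w^{2k})\, \d t/(n\beta)$. On the fibre $w^{2k} = t/(n\beta)$ we have $\sqrt{n}\beta\, w^k = \sqrt{\beta t}$, so the exponent becomes $-t + \sqrt{\beta t}\,\xi_n(w)$.

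The key step is the fibre asymptotics. By \cite[Theorem 4.7]{WatanabeGrey}, the measure $\delta(s - w^{2k}) w^{l+h} b(w)\d w$ behaves as $s^{\lambda_l-1}(-\log s)^{m_l-1}(D_l(w)\d w + o(1))$ as $s\to 0$. The measure $D_l$ is supported on the locus $\{w^{2k}=0\}$ restricted to the coordinates achieving the minimum in $\lambda_l$. I would apply this to the continuous test function $w \mapsto c(x,w)\exp(\sqrt{\beta t}\,\xi_n(w))$. This is legitimate since $\xi_n$ is a.s.\ real analytic (Proposition \ref{Ls_pointwise} applied to $a$) and $\sup_w |\xi_n| = O_p(1)$ by \cite[Theorem 5.8]{WatanabeGrey}. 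The point is that $\xi_n(w)$ may be evaluated on the support of $D_l$ up to $o_p(1)$, which is exactly what the conditions of \cite[Theorem 10]{WatanabeGreen} guarantee. Integrating in $t$ then produces
\[(n\beta)^{-\lambda_l}(\log n\beta)^{m_l-1}\int D_l(w)\, c(x,w)\, S_{\lambda_l}(\xi_n(w))\d w\,(1+o_p(1)).\]
The same computation for the denominator gives $(n\beta)^{-\lambda}(\log n\beta)^{m-1}\int D\, S_\lambda(\xi_n)\d w\,(1+o_p(1))$. Taking the ratio yields the claimed formula with $\zeta_n = \xi_n$. The $\Gamma$ factors of Lemma \ref{population_moment_scaling} are absorbed, because $S_\alpha(0)=\Gamma(\alpha)$.

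The main obstacle is uniformity: the $o(1)$ in the fibre expansion must hold uniformly over the random test function and over $t\in(0,\infty)$, so that it survives the $t$-integral. I would handle this with a split.
\begin{itemize}
\item For $t \le (n\beta)^{\epsilon}$, I would use the uniform bound $\exp(\sqrt{\beta t}\sup|\xi_n|)$ on the test function. Because $\beta\to 0$, the factor $\sqrt{\beta t}$ stays controlled, and this works on an event of probability tending to one.
\item For large $t$, the weight $e^{-t+\sqrt{\beta t}\,O_p(1)}$ makes the tail negligible.
\end{itemize}
A further point is that $S_{\lambda_l}(\xi_n)$ is bounded above and below on this event. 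Consequently the denominator is bounded away from zero, and the relative $o_p$ error passes through the quotient. Tightness of $\xi_n$ in $C(W)$ then turns these conditional bounds into the stated $o_p$ statement.
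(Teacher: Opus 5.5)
Your proposal is correct and follows essentially the same route as the paper: write $n\beta K_n = n\beta w^{2k} - \sqrt{n}\,\beta\,w^k\zeta_n(w)$ in standard form, introduce $t = n\beta w^{2k}$ so that the exponent becomes $-t + \sqrt{\beta t}\,\zeta_n(w)$, apply the level-set asymptotics with weight $w^{l+h}$ (resp.\ $|w^h|$) to numerator and denominator, and divide; the paper simply cites \cite[Theorem 10]{WatanabeGreen} for the uniformity you work out by hand. One small slip in that uniformity argument: on $t \le (n\beta)^{\epsilon}$ the factor $\sqrt{\beta t}$ need not stay bounded (e.g.\ $\beta = 1/\log n$), so cut instead at a fixed $T_0$, where $\sqrt{\beta T_0} \to 0$ and $T_0/(n\beta) \to 0$, and use the bound $e^{-t + C\sqrt{t}}$ to make the $t > T_0$ contribution uniformly small as $T_0 \to \infty$ (also keep the numerator's error additive rather than relative, since $\int D_l\, c\, S_{\lambda_l}(\zeta_n)\,\mathrm{d}w$ may vanish).
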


\begin{proof}
Write
$\Omega(w) = |w^h|b(w)\exp(-n\beta K_n(w))$
for the normalized posterior function.  The numerator of $\bb E_{\sPiEmp}[\psi]$ is
\[N = \int w^{l+h} c(x,w) b(w) \exp(-n\beta w^{2k} + \sqrt{n}\,\beta\, w^k \zeta_n(w)) \d w.\]
Introducing $t = n\beta w^{2k}$ as an auxiliary variable and applying \cite[Theorem 10]{WatanabeGreen} to the level-set integral with weight $w^{l+h}$ gives
\[N = \frac{(\log(n\beta))^{m_l-1}}{(n\beta)^{\lambda_l}} \int_0^\infty \d t \int D_l(w) c(x,w) t^{\lambda_l - 1}\exp(-t + \sqrt{\beta t} \zeta_n(w)) \d w + o_p\Big(\frac{(\log(n\beta))^{m_l-1}}{(n\beta)^{\lambda_l}}\Big).\]
The denominator $N_0 = \int \Omega(w) \d w$ gives the same expression with $l = 0$, $c = 1$, $\lambda_l$ replaced by $\lambda$, and $D_l$ replaced by $D$.  Dividing the two expressions yields the claim.
\end{proof}

\begin{lemma} \label{population_and_renormalized_posteriors}
For any bounded measurable function $c$ on $W$, as $\beta \to 0$,
\[\bb E_{D^l_{\zeta_n}}[c] = C_l \bb E_{D_l}[c] + O_p(\sqrt\beta),\]
where $\bb E_{D^l_{\zeta_n}}[c] = \int D^l_{\zeta_n}(w) c(w) \d w$ is the unnormalized integral.
\end{lemma}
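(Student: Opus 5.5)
The idea is to expand the fluctuation function $S_\alpha$ to first order in $\sqrt\beta$ and then check that the numerator and denominator of $D^l_{\zeta_n}$ reduce to the corresponding ingredients of $C_l$ up to $O_p(\sqrt\beta)$.

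\emph{Expanding $S_\alpha$.} Write
\[S_\alpha(a) = \int_0^\infty t^{\alpha-1}e^{-t}e^{a\sqrt{\beta t}}\,\d t.\]
Applying $|e^y - 1| \le |y|e^{|y|}$ with $y = a\sqrt{\beta t}$ gives
\[|S_\alpha(a) - \Gamma(\alpha)| \le |a|\sqrt\beta \int_0^\infty t^{\alpha-1/2}e^{-t + |a|\sqrt{\beta t}}\,\d t.\]
We only need this for $|a| \le A$ with $A$ fixed. For $\beta \le 1$, the bound $|a|\sqrt{\beta t} \le A^2/2 + t/2$ controls the integrand by $e^{A^2/2} t^{\alpha - 1/2}e^{-t/2}$. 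Hence
\[|S_\alpha(a) - \Gamma(\alpha)| \le C(\alpha, A)\,\sqrt\beta\,|a|,\]
uniformly for $|a| \le A$ and $\beta \le 1$.

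\emph{Making $\zeta_n$ uniformly bounded.} We use that $\|\zeta_n\|_\infty = \sup_w|\zeta_n(w)|$ is $O_p(1)$. This follows from \cite[Theorem 5.8]{WatanabeGrey}, or from the hypotheses of \cite[Theorem 10]{WatanabeGreen} under which Lemma \ref{empirical_moment_scaling} was stated, where $\zeta_n$ converges in law to a Gaussian process on the compact normal-crossing chart. So for any $\eta > 0$ there is an $A$ with $\Pr(\|\zeta_n\|_\infty \le A) \ge 1 - \eta$ for all $n$. On that event, the previous step gives, uniformly in $w$,
\[S_{\lambda_l}(\zeta_n(w)) = \Gamma(\lambda_l) + O(\sqrt\beta), \qquad S_\lambda(\zeta_n(w)) = \Gamma(\lambda) + O(\sqrt\beta).\]

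\emph{Numerator and denominator.} Since $c$ is bounded and $D_l$ is a finite measure (integrable on the compact chart), the numerator satisfies
\[\int D_l\, c\, S_{\lambda_l}(\zeta_n)\,\d w = \Gamma(\lambda_l)\int D_l\, c\,\d w + O_p(\sqrt\beta).\]
Likewise the denominator satisfies
\[\int D\, S_\lambda(\zeta_n)\,\d w = \Gamma(\lambda)\int D\,\d w + O_p(\sqrt\beta).\]
The limit $\Gamma(\lambda)\int D\,\d w$ is strictly positive, so the reciprocal is Lipschitz near it, and the quotient is
\[\frac{\Gamma(\lambda_l)\int D_l c\,\d w}{\Gamma(\lambda)\int D\,\d w} + O_p(\sqrt\beta).\]
This is exactly
\[\frac{\Gamma(\lambda_l)}{\Gamma(\lambda)}\cdot\frac{\int D_l}{\int D}\cdot\frac{\int D_l c}{\int D_l} = C_l\,\bb E_{D_l}[c],\]
which is the claim.

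\emph{Main obstacle.} The delicate point is the uniform control of the exponential tilt $e^{\zeta_n\sqrt{\beta t}}$, which is unbounded in $t$. Truncating to the high-probability event $\|\zeta_n\|_\infty \le A$ and absorbing the tilt into half of $e^{-t}$ handles this. The rest is bookkeeping: check that the $O_p$ statement is uniform in $n$ as $\beta \to 0$, and that the implied constant depends only on $\sup|c|$, the total masses of $D_l$ and $D$, and $\lambda$, $\lambda_l$.
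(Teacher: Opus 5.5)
Your proposal is correct and follows essentially the paper's route: you expand $S_\alpha$ in $\sqrt\beta$ uniformly in $w$ using tightness of $\sup_w|\zeta_n(w)|$, expand the numerator and denominator separately, and divide, using that the denominator's limit $\Gamma(\lambda)\int D$ is strictly positive. The only difference is that the paper writes the first-order term $\Gamma(\alpha)\,a\sqrt\beta\,k_\alpha$ explicitly and simply asserts the uniformity, whereas your zeroth-order bound, with the tilt absorbed into $e^{-t/2}$ on the event $\sup_w|\zeta_n(w)|\le A$, actually justifies that uniformity.
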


\begin{proof}
Expand $S_\alpha(a) = \int_0^\infty t^{\alpha-1}\exp(-t + a\sqrt{\beta t}) \d t = \Gamma(\alpha)(1 + a\sqrt{\beta} k_\alpha + O(\beta))$ where $k_\alpha = \Gamma(\alpha + 1/2)/\Gamma(\alpha)$.  Since $\zeta_n$ is tight in the sup norm on the compact set $W$, the expansion holds uniformly in $w$ when $a = \zeta_n(w)$.  By definition,
\[\bb E_{D^l_{\zeta_n}}[c] = \frac{\int D_l(w) S_{\lambda_l}(\zeta_n(w)) c(w) \d w}{\int D(w) S_\lambda(\zeta_n(w)) \d w}.\]
Substituting the expansion of $S_\alpha$ into numerator and denominator separately:
\begin{align*}
\int D_l(w) S_{\lambda_l}(\zeta_n(w)) c(w) \d w &= \Gamma(\lambda_l)\Big[\int D_l c \d w + \sqrt\beta k_{\lambda_l} \int D_l \zeta_n c \d w + O_p(\beta)\Big], \\
\int D(w) S_\lambda(\zeta_n(w)) \d w &= \Gamma(\lambda)\Big[\int D \d w + \sqrt\beta k_\lambda \int D \zeta_n \d w + O_p(\beta)\Big].
\end{align*}
The integrals $\int D_l \zeta_n c \d w$ and $\int D \zeta_n \d w$ are bounded in probability (since $\zeta_n$ is bounded on $W$ and $D_l$, $D$ are integrable).  Dividing and expanding:
\[\bb E_{D^l_{\zeta_n}}[c] = \frac{\Gamma(\lambda_l)}{\Gamma(\lambda)} \cdot \frac{\int D_l c \d w}{\int D \d w} + O_p(\sqrt\beta) = C_l \bb E_{D_l}[c] + O_p(\sqrt\beta),\]
where $C_l = \frac{\Gamma(\lambda_l)}{\Gamma(\lambda)}\cdot\frac{\int D_l}{\int D}$.
\end{proof}

\begin{remark}
The assumption $\beta \to 0$ on the inverse temperature was necessary for the argument above.  Note that such conditions already appear in the work of Watanabe, in particular in \cite{WatanabeWBIC} where $\beta$ is taken to be proportional to $1/\log(n)$.
\end{remark}

\subsubsection{Coupling kernel and susceptibility convergence}
We now have all the ingredients that we need to directly address the relationship between population and empirical coupling kernels, and therefore by Lemma \ref{kernel_to_susceptibility} the relationship between the empirical susceptibility and its estimator for a functional observable.

We begin by introducing a necessary condition on the lifts $\wt g$, which we will require for the generalization of Watanabe's results on empirical processes.

\begin{definition} \label{def:rfv}
An $L^2(q)$-valued real analytic function $\psi$ on $W$ has \emph{relatively finite variance} (RFV) if there exists $c_0 > 0$ such that for all $w \in W$ away from the global minimum locus of $K$
\[\bb E_q[\psi(X,w)^2] \le c_0 |\bb E_q[\psi(X,w)]|.\]
\end{definition}

\begin{theorem} \label{coupling_kernel_convergence}
Let $\OO = g \delta_S$ be a functional observable.  Suppose that $\wt g$ is $L^2(q)$-valued real analytic in $w$ and has RFV.  Then as $n\beta \to \infty$ and $\beta \to 0$,
\[\widehat \kappa^\OO_{n,\beta}(x,x') - \kappa^\OO_{n,\beta}(x,x') \to 0\]
in probability for $q$-almost all $(x,x') \in X^2$.
\end{theorem}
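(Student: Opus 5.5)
We work in resolution coordinates and use the moment lemmas to show that each of the three posterior moments in the coupling kernel has the same leading asymptotic form under the empirical and population posteriors, up to a vanishing error. Fix $(x,x')$ outside a $q\otimes q$-null set on which, by Proposition \ref{Ls_pointwise}, $\wt g(x,\cdot)$ and $f(x',\cdot)$ are real analytic and dominated by $L^4(q)$ envelopes.

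\textbf{Step 1: reduce to local charts.} Apply Watanabe's resolution of singularities to $K|_S$ on $S$ and to $K$ on $W$. Use a partition of unity subordinate to the resulting finitely many charts, in which $K = u^{2k}$ and $\varphi = |u^h| b(u)$. In each chart, write $f(x',u) = u^k a(x',u)$ and $\wt g(x,u) = u^j a_g(x,u)$ as in Definition \ref{def:sigma}.

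\begin{itemize}
\item The factorization of $f$ is standard, from the relatively finite variance of $f$ (\cite[\S6]{WatanabeGrey}).
\item The factorization of $\wt g$ is exactly what the RFV hypothesis on $\wt g$ (Definition \ref{def:rfv}) should supply. Since $\bb E_q[\wt g^2] \le c_0|g|$, the ideal generated by the coefficients of $\wt g$ has the vanishing order of $\sqrt{|g|}$, so $\wt g$ factors through $u^j$ in coordinates where $g = u^{2j}$ is normal crossing.
\end{itemize}

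This factorization is the main obstacle: it requires a simultaneous resolution of $K$ and $g$ that makes both monomial, together with the analogue of Watanabe's argument for $f$ applied to $\wt g$. We would attempt it by resolving the product $K\cdot g$ and then repeating the proof of \cite[Theorem 6.1]{WatanabeGrey} with $\wt g$ in place of $f$. The same step supplies the fluctuation $\zeta_n$ on $S$, together with its tightness in sup norm, using \cite[Theorem 5.8]{WatanabeGrey}.

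\textbf{Step 2: compare the three moments.} The kernel involves three moments:
\begin{itemize}
\item $\bb E_{\Pi^S}[\wt g(x,\cdot) f(x',\cdot)]$, with insertion $u^{j+k}$ and coefficient $a_g a$;
\item $\bb E_{\Pi^S}[\wt g(x,\cdot)]$, with insertion $u^j$;
\item $\bb E_{\Pi^W}[f(x',\cdot)]$, with insertion $u^k$.
\end{itemize}
For each moment, Lemma \ref{population_moment_scaling} gives the population expansion and Lemma \ref{empirical_moment_scaling} gives the empirical one. Both share the prefactor $(n\beta)^{-\tau}(\log n\beta)^{m_l-m}$; the population leading coefficient is $C_l\bb E_{D_l}[c]$ and the empirical one is $\bb E_{D^l_{\zeta_n}}[c]$. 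By Lemma \ref{population_and_renormalized_posteriors}, these leading coefficients differ by $O_p(\sqrt\beta)$. The coefficient $c$ is bounded on the compact $W$ for our fixed $(x,x')$, so that lemma applies.

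Hence each empirical moment equals the corresponding population moment plus $(n\beta)^{-\tau}(\log n\beta)^{m_l-m}\,(O_p(\sqrt\beta)+o_p(1))$. Since $\tau\ge0$, and the log power is nonpositive or else dominated by $(n\beta)^{-\tau}$ when $\tau>0$, every such error tends to $0$ in probability as $n\beta\to\infty$ and $\beta\to0$. The one exception is $\tau=0$ with $m_l=m$, where the prefactor is $1$ and the error is still $o_p(1)+O_p(\sqrt\beta)$, which also tends to $0$.

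\textbf{Step 3: assemble.} Summing over charts preserves these $o_p(1)$ bounds, because there are finitely many charts. The moments are $O_p(1)$: the population ones are bounded deterministically, and the empirical ones by the envelopes of Proposition \ref{Ls_pointwise}. Writing the difference of products as $\hat A\hat B - AB = (\hat A - A)\hat B + A(\hat B - B)$ then shows that $\widehat\kappa^\OO_{n,\beta}(x,x') - \kappa^\OO_{n,\beta}(x,x') \to 0$ in probability.

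Note that no cancellation between the covariance terms is needed, since we only claim the difference vanishes, not at the rescaled rate $(n\beta)^{\sigma}$. The one point to double-check is that the expectation over the full posterior $\PiEmp$ and the one over the restricted posterior $\Pi^{\mr{emp},S}_{n,\beta}$ are handled by separate resolutions (of $K$ on $W$ and of $K|_S$ on $S$), which causes no problem because the three moments are compared independently.
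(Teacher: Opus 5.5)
Your proposal is correct and follows the same route as the paper. Both split the kernel difference into the mixed moment on $S$ and the product of first moments on $S$ and $W$. Both then compare each empirical moment with its population counterpart via Lemmas \ref{population_moment_scaling}, \ref{empirical_moment_scaling} and \ref{population_and_renormalized_posteriors}, and finish with $\hat A\hat B - AB = (\hat A - A)\hat B + A(\hat B - B)$.

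Your extra work on charts and on deriving $\widetilde g = u^j a_g$ from RFV fills in what the paper simply assumes through the standard form of Definition \ref{def:sigma}. For this unrescaled statement at fixed $(x,x')$, the $u^j$ factorization of $\widetilde g$ is not even needed. The insertion is just a bounded analytic function of $w$, so the moment comparison can be run with $l=0$.
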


\begin{proof}
The hybrid kernel $\kappa^\OO_{n,\beta}$ is a difference of two terms: the mixed moment $\bb E_{\Pi^{\mr{pop},S}_{n,\beta}}[\wt g f]$ on $S$ and the product of first moments $\bb E_{\Pi^{\mr{pop},S}_{n,\beta}}[\wt g] \cdot \bb E_{\sPiPop}[f]$ on $S$ and $W$ respectively.  The same decomposition holds for $\widehat\kappa^\OO$ with empirical posteriors, and we show convergence to zero for the two terms individually.

We first consider the mixed moment.  By Proposition \ref{Ls_pointwise} the function $\wt g(x,w) f(x',w)$ is real analytic in $w \in S$ for $q\otimes q$-almost every $(x,x') \in X^2$.  After resolving $K|_S$ in standard form (Definition \ref{def:sigma}) this insertion has monomial form $u^{j+k} a_g(x,u) a(x',u)$, whose leading coefficient $a_g(x,0)\,a(x',0)$ is nonzero for $q\otimes q$-a.e.\ $(x, x')$ by independence of the null sets.  The moment lemmas (Lemmas \ref{population_moment_scaling} and \ref{empirical_moment_scaling}) therefore apply to this insertion on $S$.  By Lemma \ref{population_and_renormalized_posteriors} (applied on $S$), the leading coefficients of the empirical and population moments agree as $\beta \to 0$, giving $\bb E_{\Pi^{\mr{emp},S}_{n,\beta}}[\wt g f] - \bb E_{\Pi^{\mr{pop},S}_{n,\beta}}[\wt g f] \to 0$ in probability.

For the product of first moments: the same moment comparison gives $\bb E_{\Pi^{\mr{emp},S}_{n,\beta}}[\wt g] \to \bb E_{\Pi^{\mr{pop},S}_{n,\beta}}[\wt g]$ in probability (a moment on $S$) and $\bb E_{\sPiEmp}[f] \to \bb E_{\sPiPop}[f]$ in probability (a moment on $W$).  Writing
\begin{align*}
&\bb E_{\Pi^{\mr{emp},S}_{n,\beta}}[\wt g] \cdot \bb E_{\sPiEmp}[f] - \bb E_{\Pi^{\mr{pop},S}_{n,\beta}}[\wt g] \cdot \bb E_{\sPiPop}[f] \\
&\quad = \big(\bb E_{\Pi^{\mr{emp},S}_{n,\beta}}[\wt g] - \bb E_{\Pi^{\mr{pop},S}_{n,\beta}}[\wt g]\big) \cdot \bb E_{\sPiEmp}[f] \\
&\qquad + \bb E_{\Pi^{\mr{pop},S}_{n,\beta}}[\wt g] \cdot \big(\bb E_{\sPiEmp}[f] - \bb E_{\sPiPop}[f]\big),
\end{align*}
and noting that $\bb E_{\sPiEmp}[f]$ is bounded in probability (it is a posterior expectation of $f$, which is bounded on compact $W$), the product converges to zero in probability.
\end{proof}

\begin{theorem} \label{convergence_zeroth_order}
Let $\OO = \sum_i g_i \delta_{S_i}$ be a functional observable.  Suppose that each $\wt g_i$ is real analytic in $w$ and has RFV.  Then as $n\beta \to \infty$ and $\beta \to 0$,
\[\mr{Cov}^{\mr{res}}_{\sPiEmp}(\OO_n, \Delta K_n) - \mr{Cov}^{\mr{res}}_{\sPiPop}(\OO, \Delta K) \to 0\]
in probability.  In other words, the renormalized susceptibility estimator is consistent.
\end{theorem}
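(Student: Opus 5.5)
By linearity of the restricted covariance in its first argument (Definition \ref{def:restricted}), I will reduce to a single summand $\OO = g\delta_S$. Since $\mr{Cov}^{\mr{res}}$ is also linear in the second argument, I will split the difference as
\[\Big[\mr{Cov}^{\mr{res}}_{\sPiEmp}(\OO_n, \Delta K_n) - \mr{Cov}^{\mr{res}}_{\sPiEmp}(\OO_n, \Delta K)\Big] + \Big[\mr{Cov}^{\mr{res}}_{\sPiEmp}(\OO_n, \Delta K) - \mr{Cov}^{\mr{res}}_{\sPiPop}(\OO, \Delta K)\Big].\]
The second bracket is exactly the quantity treated in Lemma \ref{kernel_to_susceptibility}. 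Its hypotheses are supplied by two earlier results. Pointwise kernel convergence $\widehat\kappa^\OO_{n,\beta} - \kappa^\OO_{n,\beta} \to 0$ in probability for $q\otimes q$-a.e.\ $(x,x')$ is Theorem \ref{coupling_kernel_convergence}, which uses that $\wt g$ is RFV and ($L^4$, hence $L^2$) real analytic. The domination is Remark \ref{rmk:domination}. One small adjustment is needed: Lemma \ref{kernel_to_susceptibility} is phrased with $n \to \infty$ and a fixed limit. Here the target $\kappa^\OO_{n,\beta}$ itself moves, so I will rerun its dominated-convergence argument for the difference $\widehat\kappa - \kappa$ directly. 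The sub-subsequence argument is unchanged, because the dominating function $2M$ does not depend on $n,\beta$.

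For the first bracket, I will write it as $\mr{Cov}^{\mr{res}}_{\sPiEmp}(\OO_n, \Delta K_n - \Delta K)$ and expand using Definition \ref{def:restricted}. This gives
\[\bb E_{\Pi^{\mr{emp},S}_{n,\beta}}\big(g_n (\Delta K_n - \Delta K)|_S\big) - \bb E_{\Pi^{\mr{emp},S}_{n,\beta}}(g_n|_S)\,\bb E_{\sPiEmp}(\Delta K_n - \Delta K).\]
Both terms are bounded in absolute value by $2\sup_{w\in S}|g_n(w)| \cdot \sup_{w \in W}|\Delta K_n(w) - \Delta K(w)|$. The second factor is $O_p(n^{-1/2})$ by the consequence of \cite[Theorem 5.8]{WatanabeGrey} recorded just before Lemma \ref{kernel_to_susceptibility}, which applies because $\xi f$ is $L^2(q)$-valued real analytic by H\"older. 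For the first factor, $\sup_S|g_n| \le \sup_S |g| + \sup_S|g_n - g|$. Here $g$ is continuous on the compact $S$, and $\sup_S|g_n - g| = O_p(n^{-1/2})$ by the same empirical process bound applied to $\wt g$. Alternatively, $\sup_S |g_n| \le \frac1n\sum_j \sup_w|\wt g(x_j,w)|$, which is $O_p(1)$ by Proposition \ref{Ls_pointwise} and the law of large numbers. So the first bracket is $O_p(n^{-1/2})$, and it tends to $0$ because $n\beta\to\infty$ with $\beta\to 0$ forces $n\to\infty$.

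Summing over the finitely many $i$ preserves convergence in probability, which completes the argument. The step I expect to be the main obstacle is the second bracket, and specifically the application of Theorem \ref{coupling_kernel_convergence} in the double limit. Lemmas \ref{population_moment_scaling}–\ref{population_and_renormalized_posteriors} compare the leading asymptotic coefficients, but the difference of the moments carries the prefactor $(n\beta)^{-\tau}(\log n\beta)^{m_l-m}$ together with $o_p$ remainders. I will need to check that the remainders are uniform enough that the difference (not just its ratio) tends to zero. This is automatic when $\tau \ge 0$, since the prefactor is then bounded, and I will take the theorem as given there. I will also confirm that the first-bracket bounds hold uniformly in the posterior, which they do because the bounds are sup-norm estimates independent of $w$.
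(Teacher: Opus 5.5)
Your proposal is correct and follows the paper's proof. Like the paper, you reduce to a single summand $g\delta_S$, replace $\Delta K_n$ by $\Delta K$ on the empirical side using the $O_p(n^{-1/2})$ sup-norm bound, and then invoke Lemma \ref{kernel_to_susceptibility} with Theorem \ref{coupling_kernel_convergence} and Remark \ref{rmk:domination}; your explicit bound $2\sup_S|g_n|\,\sup_W|\Delta K_n-\Delta K|$ and your remark about the moving target $\kappa^\OO_{n,\beta}$ simply spell out steps the paper leaves implicit.
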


\begin{proof}
By linearity it suffices to consider $\OO = g \delta_S$.  By the argument of Lemma \ref{kernel_to_susceptibility} using Watanabe's empirical process bound, $\sup_w |\Delta K_n(w) - \Delta K(w)| = O_p(1/\sqrt{n})$, so it suffices to prove convergence with $\Delta K$ in place of $\Delta K_n$ on the empirical side.  The coupling kernel convergence (Theorem \ref{coupling_kernel_convergence}) and the domination condition (Remark \ref{rmk:domination}) verify the hypotheses of Lemma \ref{kernel_to_susceptibility}, which gives the result.
\end{proof}

\begin{remark} \label{exact_convergence}
When $\OO = \sum_i g_i \delta_{S_i}$ is deterministic (each $g_i$ independent of $q$), a more direct argument is available that avoids the coupling kernel.  The key ideas are as follows.

Fix a log resolution $\varpi$ for $K|_S$ with local coordinate $u$ in which 
\[K|_S(\varpi(u)) = u^{2k} \text{ and } \varphi(\varpi(u))|\varpi'(u)| = \phi(u) u^e,\]
with $\phi > 0$ smooth.  By \cite[Main Theorem 1]{WatanabeGrey}, $nK_n(\varpi(u)) = nu^{2k} - \sqrt{n} u^k \zeta_n(u)$ where $\zeta_n \to \zeta$ in law with respect to the sup norm.\footnote{We write $\zeta_n$ for Watanabe's empirical process (denoted $\xi_n$ in \cite{WatanabeGrey,WatanabeGreen,sing2}) to avoid conflict with the perturbation $\xi \in T_q\Delta(X)$ from \S2.}

For a smooth deterministic insertion $h$ on $S$, the partition function integrals become $Z^p(n,\zeta_n,\psi)$ with $\psi = h \circ \varpi$.  By \cite[Theorem 4.9]{WatanabeGrey}, each is approximated by its central part, which factors as $\gamma_b (\log n)^{r-1} n^{-(\lambda+p)} \cdot \Phi^p(\zeta_n,\psi)$.  Since $\zeta_n \to \zeta$ in law, the continuous mapping theorem gives $\Phi^p(\zeta_n,\psi) \to \Phi^p(\zeta,\psi)$ in law.  The deterministic prefactor cancels in the ratio.  As $\beta \to 0$, $\Phi^p(\zeta,\psi)/\Phi^0(\zeta,1) \to \Phi^p(0,\psi)/\Phi^0(0,1)$ because $\zeta$ enters only through $e^{\sqrt{\beta t}\zeta_0(y)} \to 1$ uniformly.

For the $\Delta K_n$ insertion, write $\Delta K_n = \Delta K + (\Delta K_n - \Delta K)$.  The $\Delta K$ part is a fixed smooth function, handled as above.  The remainder $\Delta K_n - \Delta K$ satisfies $\sup_w |\Delta K_n(w) - \Delta K(w)| = O_p(1/\sqrt{n})$ by \cite[Theorem 5.8]{WatanabeGrey}, so its contribution is subleading.
\end{remark}

\begin{theorem} \label{asymptotic_unbiasedness}
Suppose that $\xi \in L^a(q)$ and that $f$ and each $\wt g_i$ are $L^b(q)$-valued real analytic on $W$, where $\frac{1}{a} + \frac{1}{b} = \frac{1}{4}$.  Then under the hypotheses of Theorem \ref{convergence_zeroth_order},
\[\bb E\Big[\mr{Cov}^{\mr{res}}_{\sPiEmp}(\OO_n, \Delta K_n)\Big] - \mr{Cov}^{\mr{res}}_{\sPiPop}(\OO, \Delta K) \to 0\]
as $n\beta \to \infty$ and $\beta \to 0$.  In other words, the renormalized susceptibility estimator is asymptotically unbiased.
\end{theorem}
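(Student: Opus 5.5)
The plan is to upgrade the convergence in probability of Theorem \ref{convergence_zeroth_order} to convergence of expectations. A sequence that tends to zero in probability and is uniformly integrable also tends to zero in $L^1$. Write
\[D_n = \mr{Cov}^{\mr{res}}_{\sPiEmp}(\OO_n, \Delta K_n) - \mr{Cov}^{\mr{res}}_{\sPiPop}(\OO, \Delta K).\]
The second term is deterministic, so it suffices to show $\bb E|D_n| \to 0$. Since Theorem \ref{convergence_zeroth_order} already gives $D_n \to 0$ in probability, the whole proof reduces to proving uniform integrability of $\{D_n\}$. For this I will bound $\sup_n \bb E[D_n^2] < \infty$.

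By linearity we may take $\OO = g\delta_S$. Every posterior expectation appearing in the restricted covariance is an average with respect to a probability measure. We therefore get the pathwise bound
\[|\mr{Cov}^{\mr{res}}_{\sPiEmp}(\OO_n, \Delta K_n)| \le 2 \sup_{w \in S}|g_n(w)| \cdot \sup_{w \in W} |\Delta K_n(w)|,\]
and this holds whatever the random posterior is. The point is that the posterior randomness, which is the delicate singular part, drops out: we never need moment bounds on $\zeta_n$ or on the partition functions, only on sup-norms of empirical averages. Cauchy--Schwarz then gives
\[\bb E[D_n^2] \lesssim \bb E\big[\sup_S |g_n|^4\big]^{1/2}\, \bb E\big[\sup_W |\Delta K_n|^4\big]^{1/2} + \mr{Cov}^{\mr{res}}_{\sPiPop}(\OO,\Delta K)^2 .\]
The last term is bounded uniformly by Remark \ref{rmk:domination}.

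Next I bound the fourth moments. We have $\sup_W |\Delta K_n| \le \frac1n \sum_j |\xi(x_j)| F(x_j)$, where $F(x) = \sup_w |f(x,w)|$. The hypothesis $\frac1a + \frac1b = \frac14$ with Hölder gives $\xi F \in L^4(q)$, using $F \in L^b(q)$ from Proposition \ref{Ls_pointwise}. Convexity (Jensen) for the average of i.i.d. terms then gives $\bb E[\sup_W|\Delta K_n|^4] \le \bb E_q[(|\xi|F)^4] < \infty$, uniformly in $n$. Similarly $\sup_S|g_n| \le \frac1n\sum_j G(x_j)$ with $G = \sup_{w\in S}|\wt g(\cdot,w)| \in L^b(q) \subset L^4(q)$, since $b \ge 4$. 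So $\bb E[\sup_S|g_n|^4] \le \|G\|_4^4$.

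This is where the strengthened hypothesis is used. The weaker $L^2$-valued analyticity of $\xi f$ from Remark \ref{rmk:holder_tradeoff} would only control second moments of each factor, and their product would not be square-integrable. This is the step I would check most carefully. If the fourth-moment route via pointwise sups failed (for instance, if Proposition \ref{Ls_pointwise} only gave $L^b$ of the sup on each chart), I would instead bound $\bb E\|\cdot\|^4_{\sup}$ through the absolutely convergent $L^4$-valued Taylor expansions chart by chart, summing $\|a_\alpha\|_4 r^{|\alpha|}$.

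With $\sup_n \bb E[D_n^2] < \infty$, the family $\{D_n\}$ is uniformly integrable. Combined with $D_n \to 0$ in probability from Theorem \ref{convergence_zeroth_order}, Vitali's theorem gives $\bb E|D_n| \to 0$, and hence $|\bb E D_n| \to 0$, which is the claim. The limit is taken along any sequence with $n\beta \to \infty$ and $\beta \to 0$. The bound above is independent of $\beta$, so the argument is uniform in how the double limit is approached.
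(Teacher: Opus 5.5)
Your proposal is correct and follows the paper's proof: convergence in probability from Theorem \ref{convergence_zeroth_order}, the pathwise bound $2\sup_S|g_n|\cdot\sup_W|\Delta K_n|$ plus a deterministic constant, Cauchy--Schwarz against uniform fourth-moment bounds, and uniform integrability. The one difference is how you get the fourth-moment bounds. The paper cites Watanabe's empirical process theorem with $s=4$. You instead use Proposition \ref{Ls_pointwise}, H\"older and Jensen on the empirical averages, which is a slightly more self-contained route to the same estimate, since no $\sqrt{n}$-scaled fluctuation control is needed here.
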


\begin{proof}
By Theorem \ref{convergence_zeroth_order}, $X_n := \mr{Cov}^{\mr{res}}_{\sPiEmp}(\OO_n, \Delta K_n) - \mr{Cov}^{\mr{res}}_{\sPiPop}(\OO, \Delta K) \to 0$ in probability.  It suffices to show that $\{X_n\}$ is uniformly integrable, since convergence in probability plus uniform integrability implies convergence of expectations.

The restricted covariance is bounded by
\[|X_n| \le |\mr{Cov}^{\mr{res}}_{\sPiEmp}(\OO_n, \Delta K_n)| + |\mr{Cov}^{\mr{res}}_{\sPiPop}(\OO, \Delta K)| \le 2 \sup_W |g_n| \sup_W |\Delta K_n| + C_0\]
where $C_0 = |\mr{Cov}^{\mr{res}}_{\sPiPop}(\OO, \Delta K)|$ is deterministic.  The H\"older condition $\frac{1}{a} + \frac{1}{b} = \frac{1}{4}$ ensures that $\xi f$ is $L^4(q)$-valued real analytic, and hence by \cite[Theorem 5.8]{WatanabeGrey} applied with $s = 4$, there exist constants $C$ and $C'$ such that $\bb E[\sup_W |g_n|^4] \le C$ and $\bb E[\sup_W |\Delta K_n|^4] \le C'$ uniformly in $n$.  By Cauchy--Schwarz,
\[\bb E[(\sup_W |g_n| \sup_W |\Delta K_n|)^2] \le \bb E[\sup_W |g_n|^4]^{1/2} \bb E[\sup_W |\Delta K_n|^4]^{1/2} \le (CC')^{1/2}.\]
Hence $\bb E[|X_n|^2]$ is uniformly bounded in $n$, which implies uniform integrability of $\{X_n\}$.
\end{proof}

\section{Differential Observables} \label{sec:differential}

We now treat differential observables $\OO = \sum_i P_i \delta_{S_i}$ with $M_i = \mr{ord}(P_i) > 0$ for some $i$.  Our strategy is to reduce to the functional case in the following way.  We argue that we may reduce our susceptibility estimation problem to a local problem in parameter space.  In local charts we will use integration by parts to reduce to observables for which all derivatives act in directions normal to the support of the observable.  We then identify such observables with sums of functional observables by applying the Leibniz rule upon evaluation of the restricted covariance. 

Following this plan, we will first show that every differential observable admits a local normal form.  That is, near each point of $S$, it can be expressed as a finite sum of normal-derivative terms applied to functional observables.

\begin{lemma} \label{tangential_normal_decomposition}
Let $\OO = P \delta_S$ where $P$ is a linear differential operator of order $M$ with $L^2(q)$-valued real analytic coefficients, and suppose that all coefficient lifts and their derivatives up to order $M$ have RFV.  Then for each $s \in S$ there exists an open neighborhood $U_s$ of $s$ in $W$ such that $\OO|_{U_s}$ is a finite sum
\[\OO|_{U_s} = \sum_j \dd^{\beta_j} (\eta_j \delta_{T_j})\]
where each $T_j$ is $S \cap U_s$ or a boundary stratum thereof, each $\dd^{\beta_j}$ is a differential operator purely normal to $S$ in the chart coordinates on $U_s$, and each $\eta_j \delta_{T_j}$ is a functional observable whose lift has RFV.
\end{lemma}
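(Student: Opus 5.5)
The plan is to work locally in an adapted chart and move every tangential derivative off the delta distribution by integration by parts. Since $S$ is neatly embedded, each $s \in S$ has a chart $U_s$ of $W$, a manifold-with-corners chart, with coordinates $(y,z)$. In these coordinates $S \cap U_s = \{z = 0\}$, and the boundary strata of $W$ meeting $U_s$ are cut out by inequalities $y_a \ge 0$ in the tangential coordinates only. Transversality of $S$ to each boundary stratum is exactly what allows the corner coordinates to be chosen tangentially. The chart is real analytic, so the chain rule rewrites $P = \sum_{|\alpha|\le M} g_\alpha \dd^\alpha$ in the ambient coordinates as
\[P = \sum_{|\gamma|+|\beta|\le M} h_{\gamma\beta}(w)\, \dd_y^\gamma \dd_z^\beta,\]
where each $h_{\gamma\beta}$ is a finite sum of products of the $g_\alpha$ with real analytic (deterministic) Jacobian factors. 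Its lift $\wt h_{\gamma\beta}$ is the corresponding combination of the lifts $\wt g_\alpha$, so it is again $L^2(q)$-valued real analytic.

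Next I would put the operator in normal form, with coefficients on the left and derivatives on the right. To do this, pair $\OO$ against a test function $\Phi$ and use the distributional adjoint. A term $\langle h\, \dd_y^\gamma \dd_z^\beta \delta_S,\Phi\rangle$ equals, up to sign, $\int_S \dd_y^\gamma \dd_z^\beta (h\Phi)|_{z=0}\,\d y$. I then expand by the Leibniz rule in $y$ and integrate by parts in $y$ along $S\cap U_s$ to remove every tangential derivative from $\Phi$. Each integration by parts produces an interior term, where the $y$-derivative falls on the coefficient (or on Jacobian factors), and a boundary term supported on a face $\{y_a = 0\}$ of $S\cap U_s$. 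These faces are exactly the boundary strata $T_j$ of $S \cap U_s$. Normal derivatives $\dd_z$ commute with restriction to these faces, because the faces are cut out by tangential coordinates. Inducting on $|\gamma|$, each term ends up of the form $\dd_z^{\beta}(\eta\,\delta_{T})$. Here $\eta$ is a sum of products of $\dd_y^{\gamma'} \dd_z^{\beta'} h_{\gamma\beta}$ with real analytic Jacobian factors, and the orders satisfy $|\gamma'|+|\beta'|\le M$. If we want pure $\dd_z^\beta$ acting on $\eta\delta_T$, we move the $z$-derivatives of the coefficient outside using $\dd_z(\eta\delta) = (\dd_z\eta)\delta + \eta\,\dd_z\delta$, again inducting downward on order.

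The last step checks that the new coefficients are functional observables with RFV lifts. Each $\eta_j$ is a finite linear combination, with real analytic deterministic coefficients, of derivatives $\dd^{\mu}$ of order at most $M$ of the original coefficients $g_\alpha$. By Proposition \ref{derivative_of_Ls_real_analytic}, the lifts $\dd^\mu \wt g_\alpha$ are $L^2(q)$-valued real analytic, and differentiation commutes with the $q$-expectation, so these lifts are the lifts of $\dd^\mu g_\alpha$.

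The main obstacle is RFV. The hypothesis gives RFV only for each $\dd^\mu \wt g_\alpha$ individually, and RFV is not obviously preserved under linear combinations with analytic coefficients: $|\bb E_q[\psi_1+\psi_2]|$ can be small while each $|\bb E_q[\psi_i]|$ is not. My first attempt would be to avoid combining at all. I would take the decomposition term by term, letting the index $j$ run over individual products (Jacobian factor)$\times\dd^\mu g_\alpha$, so that each $\eta_j$ is a single such product. For a single term $\eta = a(w)\,\dd^\mu g_\alpha$ with $a$ bounded on compact $\overline{U_s}$, we have $\bb E_q[\eta^2] = a^2\,\bb E_q[(\dd^\mu\wt g_\alpha)^2]\le c_0 a^2 |\bb E_q \dd^\mu\wt g_\alpha| \le c_0\sup|a|\cdot|\bb E_q[a\,\dd^\mu\wt g_\alpha]|$, which is RFV with constant $c_0\sup_{\overline U_s}|a|$. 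The statement allows a finite sum, so this term-by-term splitting is legitimate. Shrinking $U_s$ keeps every supremum finite.
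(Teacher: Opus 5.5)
Your route is the paper's: an adapted corner chart from the neat embedding, integration by parts along $S\cap U_s$ to strip tangential derivatives, induction on tangential order, and boundary terms on the faces $\{y_a=0\}$. The face step has a genuine gap, and the paper's proof has the same one.

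When you integrate $\dd_{y_a}$ by parts, the boundary term on $F_a=\{y_a=0\}$ still carries the remaining derivatives $\dd_y^{\gamma-e_a}$. If $\gamma_a\ge 2$, these include $\dd_{y_a}$, which is transverse to $F_a$ and cannot be integrated by parts along it. Your remark that $\dd_z$ commutes with restriction to faces does not address this. So the induction on $|\gamma|$ does not close, and the conclusion with only $\dd_z$-derivatives is in fact false.

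Take $W=[0,1]^2$, $S=[0,1]\times\{1/2\}$ (neatly embedded), $\OO=\dd_{w_1}^2\delta_S$ (coefficient $1$, trivially RFV) and $s=(0,1/2)$. For $\Phi$ supported near $s$ one has $\langle\OO,\Phi\rangle=-\dd_{w_1}\Phi(s)$. In any adapted chart $(y,z)$ at $s$, test with $\Phi_\eps=\eps\,\theta(y/\eps)\chi(z)$, where $\theta$ is compactly supported with $\theta(t)=t$ near $0$ and $\chi(0)=1$. The left side equals $-\dd_{w_1}y(s)\neq 0$ for every $\eps$. On the right, every term $\pm\int_T\eta\,\dd_z^b\Phi_\eps$ is $O(\eps^2)$ for $T=S\cap U_s$, and vanishes for $T=\{s\}$ because $\theta(0)=0$. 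Hence for $M\ge 2$ at points of $\dd S$, no argument can produce the stated form.

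The paper makes the same slip where it says $\dd^\beta$ ``remains purely normal to $F_\ell$'' and applies the inductive hypothesis on $F_\ell$, overlooking a leftover $\dd_{v_\ell}$ in $\alpha'$. What your argument actually proves is the version in which each $\dd^{\beta_j}$ is purely normal to $T_j$ rather than to $S$. On a face, reclassify $y_a$ as a normal coordinate and induct on the order tangential to the current stratum. That order drops at every integration by parts: by exactly one for the interior term, and by $\gamma_a\ge 1$ for the boundary term on $\{y_a=0\}$. The form as stated does hold when $U_s$ can be chosen to avoid $\dd S$, or when $M\le 1$.

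Separately, your term-by-term splitting improves on the paper's RFV bookkeeping. The paper asserts that the chart coefficients and their tangential derivatives have RFV ``by hypothesis''. But these are analytic combinations of the original coefficients, and RFV is not closed under sums. Your bound $\bb E_q[(a\,\dd^\mu\wt g_\alpha)^2]\le c_0\sup|a|\cdot|\bb E_q[a\,\dd^\mu\wt g_\alpha]|$ handles single products correctly.
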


\begin{proof}
Let $m = \dim S$ and $r = d - m$.  Since $S$ is neatly embedded in $W$, for each $s \in S$ there exists a chart $(U_s, \phi_s)$ in which $\phi_s(U_s \cap S) \sub \RR^m \times \{0\}$ and $\phi_s(U_s \cap W) \sub \RR_{\ge 0}^p \times \RR^{d-p}$ for some $p \le d$.\footnote{In the terminology of Melrose \cite{Melrose}, $S$ is a \emph{$p$-submanifold} of the manifold with corners $W$.}  Working in this chart, write $v = (v_1, \ldots, v_{p+q})$ for the tangential coordinates on $S$ and $u = (u_1, \ldots, u_r)$ for the normal coordinates, so that $S \cap U_s = \{u = 0\}$ and $W \cap U_s = \RR^p_{\ge 0} \times \RR^q \times \RR^r$.  In these coordinates $P$ becomes a smooth-coefficient operator; expanding it produces finitely many monomial terms $g_\gamma \dd^\alpha \dd^\beta$ with $\dd^\alpha$ tangential and $\dd^\beta$ normal.  It suffices to treat each monomial.  We induct on $|\alpha|$, the tangential order.

The base case $|\alpha| = 0$ is immediate: $g_\gamma \dd^\beta \delta_S = \dd^\beta(g_\gamma \delta_S)$ with $g_\gamma \delta_S$ functional.

For the induction step, write $\alpha = \alpha' + e_j$ for some tangential coordinate direction $j$.  The boundary faces of $S \cap U_s$ are the faces $F_\ell = \{v_\ell = 0\} \cap S \cap U_s$ for $\ell = 1, \ldots, p$, each with outward unit normal $\nu^{(\ell)} = -e_\ell$.  Higher-codimension strata have measure zero in $\dd S$.  For any test function $\Phi$ on $S \cap U_s$, integration by parts gives
\[\int_{S \cap U_s} g_\gamma(v) \dd_{v_j} \Phi(v) \d v = -\int_{S \cap U_s} (\dd_{v_j} g_\gamma(v)) \Phi(v) \d v + \sum_{\ell=1}^p \int_{F_\ell} g_\gamma(v) \Phi(v) \nu^{(\ell)}_j \d\sigma_\ell,\]
where $\d v$ denotes the volume form on $S \cap U_s$ in the chart coordinates.  The bulk term has tangential order $|\alpha| - 1$ with coefficient $\dd_{v_j} g_\gamma$, whose lift has RFV by hypothesis.  Each boundary term has tangential order $|\alpha| - 1$ on $F_\ell$ with coefficient $g_\gamma|_{F_\ell} \nu^{(\ell)}_j$, which inherits RFV.  Since $\dd^\beta$ remains purely normal to $F_\ell$, the inductive hypothesis applies.

After at most $|\alpha|$ steps, every term has the form $\dd^\beta(\eta \delta_T)$ with $T$ a stratum of $S \cap U_s$, $\dd^\beta$ a chart-normal operator, and $\eta \delta_T$ functional with RFV lift.
\end{proof}

By Lemma \ref{tangential_normal_decomposition}, in a neighborhood of each point of $S$ the observable $\OO$ is a finite sum of terms $\dd^\beta(\eta \delta_T)$ with $\dd^\beta$ chart-normal.  We now define the Leibniz coefficients, the local renormalized susceptibility, and then the global quantities.

\begin{definition} \label{def:leibniz_coefficients}
Let $\dd^\beta$ be a differential operator purely normal to $S$ in chart coordinates, and let $F = e^{-n\beta G}\varphi$ with $G = K$ or $G = K_n$.  Assume $\varphi|_S > 0$.  Writing $h = -n\beta G + \log\varphi$ so that $F = e^h$, the Leibniz rule gives $\dd^\beta F|_{u=0}$ as a finite sum
\begin{equation} \label{Leibniz_equation}
    e^{-n\beta G|_S}\,\varphi|_S \sum_j (-n\beta)^{r_j} Q_j^G(v),
\end{equation}
where each $Q_j^G$ is a polynomial in normal derivatives $(\dd^{\gamma_i} G)|_{u=0}$ and $(\dd^\delta \log\varphi)|_{u=0}$, and $r_j$ counts the number of $G$-derivative factors.  We call the $Q_j^G$ the \emph{Leibniz coefficients} of $\dd^\beta$ with respect to $G$.
\end{definition}

\begin{definition} \label{def:restricted_differential}
Let $\OO' = g \delta_S$ be a functional observable, $\dd^\beta$ a chart-normal operator of order $|\beta|$, and $M \ge |\beta|$ an integer (the global order of the ambient differential observable).  The \emph{local restricted covariance} of $\dd^\beta(\OO')$ with $\Delta K$ at order $M$ is
\[\mr{Cov}^{\mr{res}}_{\Pi_{n,\beta}}(\dd^\beta(\OO'), \Delta K; M) = \frac{1}{(n\beta)^M}\sum_j (-1)^{|\beta|} (-n\beta)^{r_j} \mr{Cov}^{\mr{res}}_{\Pi_{n,\beta}}(g Q_j^G \delta_S, \Delta K),\]
where $\Pi_{n,\beta}$ stands for $\PiPop$ or $\PiEmp$, the $Q_j^G$ are the Leibniz coefficients of $\dd^\beta$, with $G = K$ on the population side and $G = K_n$ on the empirical side.  Since $r_j \le |\beta| \le M$ the prefactors $(n\beta)^{r_j - M} \le 1$ are bounded.  The local \emph{renormalized susceptibility} and \emph{renormalized susceptibility estimator} are $-\mr{Cov}^{\mr{res}}_{\sPiPop}(\dd^\beta(\OO'), \Delta K; M)$ and $-\mr{Cov}^{\mr{res}}_{\sPiEmp}(\dd^\beta(\OO')_n, \Delta K_n; M)$ respectively.
\end{definition}

\begin{definition} \label{def:global_renormalized}
For a general differential observable $\OO = P \delta_S$ of pure order $M$, choose a finite set of open charts $\{U_i\}$ in $W$ covering $S$ as in Lemma \ref{tangential_normal_decomposition} and a partition of unity $\{\rho_i\}$ subordinate to $\{U_i\}$.  Since $\langle P\delta_S, F \rangle = \sum_i \langle \rho_i P\delta_S, F \rangle$, Lemma \ref{tangential_normal_decomposition} applied to $(P)|_{U_i}$ produces a finite sum $\sum_j \dd^{\beta_{ij}}(\eta_{ij} \delta_{T_{ij}})$ with $\dd^{\beta_{ij}}$ chart-normal and $\eta_{ij}$ incorporating $\rho_i g_\alpha$ and its tangential derivatives.  The \emph{renormalized susceptibility} is
\[\chi^{\mr{pop,ren}}_{n,\beta}(\OO, \xi) = -\sum_{i,j} \mr{Cov}^{\mr{res}}_{\sPiPop}(\rho_i \dd^{\beta_{ij}}(\eta_{ij} \delta_{T_{ij}}), \Delta K; M)\]
and the \emph{renormalized estimator} is
\[\widehat\chi_{n,\beta}^{\mr{ren}}(\OO, \xi) = -\sum_{i,j} \mr{Cov}^{\mr{res}}_{\sPiEmp}(\rho_i\dd^{\beta_{ij}}((\eta_{ij})_n \delta_{T_{ij}}), \Delta K_n; M)\]
using Definition \ref{def:restricted_differential} with the global order $M$ for each summand.
\end{definition}

\begin{remark}
The definition depends on the choice of cover and partition of unity.  The resulting quantity is independent of these choices, since the distributional pairing $\langle P\delta_S, F \rangle = \sum_i \langle \rho_i P\delta_S, F \rangle$ is intrinsic.
\end{remark}

\begin{remark} \label{rmk:analyticity_partition}
The Leibniz coefficients $Q_j^G$ and the functional coefficients $\eta_{ij}$ are real analytic in the chart coordinates: the $Q_j^G$ are polynomials in normal derivatives of $K$ and of $\log\varphi$, while the $\eta_{ij}$ arise from the original coefficients of $P$ and their tangential derivatives.  The partition of unity factors $\rho_i$ are smooth but \emph{not} real analytic, so we cannot apply Theorem \ref{convergence_zeroth_order} directly.  However the partition of unity is deterministic, so it appears identically in the population and empirical quantities.  This will allow us to check our convergence theorem using purely the local quantities defined in terms of the real analytic Leibniz coefficients.
\end{remark}

\begin{lemma} \label{smooth_prefactor}
Let $\OO = g \delta_T$ be a functional observable satisfying the hypotheses of Theorem \ref{convergence_zeroth_order}, and let $\rho$ be a smooth function on $W$.  Then
\[\mr{Cov}^{\mr{res}}_{\sPiEmp}((\rho \OO)_n, \Delta K_n) - \mr{Cov}^{\mr{res}}_{\sPiPop}(\rho \OO, \Delta K) \to 0\]
in probability as $n\beta \to \infty$ and $\beta \to 0$.
\end{lemma}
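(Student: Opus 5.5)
The plan is to rerun the proof of Theorem \ref{convergence_zeroth_order} for $\rho\OO = (\rho g)\delta_T$. The only place real analyticity of the insertion was used is in the moment lemmas, so we keep $\rho$ out of the analytic insertion and carry it along as a deterministic bounded weight.

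\textbf{Step 1: reduce to coupling kernels.} By linearity of the restricted covariance in the coefficient, $(\rho\OO)_n = \rho\, g_n \delta_T$. Hence the first summand in the proof of Lemma \ref{kernel_to_susceptibility} is bounded by $\sup_T|\rho|\cdot\sup_T|g_n-g|\cdot \bb E|\Delta K| = O_p(1/\sqrt n)$, exactly as before. The same reasoning lets us replace $\Delta K_n$ by $\Delta K$, using $\sup_w|\Delta K_n-\Delta K| = O_p(1/\sqrt n)$. This leaves the comparison of the kernels $\kappa^{\rho\OO}$ and $\widehat\kappa^{\rho\OO}$, which are built with the lift $\rho(w)\wt g(x,w)$. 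Domination follows as in Remark \ref{rmk:domination}, with the extra factor $\sup|\rho|<\infty$. So by Lemma \ref{kernel_to_susceptibility} it suffices to prove pointwise convergence in probability of these kernels for $q\otimes q$-a.e.\ $(x,x')$.

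\textbf{Step 2: localize the moments.} Each term in the kernels is a posterior moment $\bb E_\Pi[\rho(w)\psi(x,x',w)]$, on $T$ or on $W$, with $\psi$ real analytic: $\psi = \wt g f$ or $\psi=\wt g$ on $T$, and $\psi = f$ on $W$ (no $\rho$ there). We cover $T$ by finitely many resolution charts in standard form (Definition \ref{def:sigma}) and choose a second partition of unity adapted to them. In each chart the numerator becomes $\int u^{l+h} c(x,u)\,\rho(\varpi(u))\, b(u) e^{-n\beta u^{2k}+\ldots}\,\d u$.

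Now we revisit the proofs of Lemmas \ref{population_moment_scaling} and \ref{empirical_moment_scaling}. Watanabe's level-set asymptotics \cite[Theorem 4.7]{WatanabeGrey} and \cite[Theorem 10]{WatanabeGreen} require analyticity only of the phase $u^{2k}$ and of the monomial part of the weight. The amplitude $c\cdot\rho\cdot b$ needs only to be continuous (indeed smooth suffices), since the leading term is its restriction $\int D_l(w)\,c\,\rho\,\d w$ to the limiting distribution.

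This step is the main obstacle. We must check that the level-set integral $\int \delta(t/(n\beta)-u^{2k})\, u^{l+h}\, a(u)\,\d u$ has the stated leading asymptotics for a merely smooth amplitude $a$. We would argue this directly: write $a = a(u_0\text{-face}) + O(|u|)$ on the support of $D_l$, and use the fact that the $O(|u|)$ remainder shifts the exponent $\lambda_l$ strictly upward, so it is subleading. If $a$ vanishes on the support of $D_l$, the leading term is simply zero and the conclusion of the comparison below still holds.

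\textbf{Step 3: compare and assemble.} With the modified moment lemmas in hand, Lemma \ref{population_and_renormalized_posteriors} applies verbatim with $c$ replaced by the bounded function $c\rho$. So the empirical and population leading coefficients agree up to $O_p(\sqrt\beta)$, and both share the same power and log prefactors. The moments without $\rho$ on $W$ are already handled by Theorem \ref{coupling_kernel_convergence}.

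The product decomposition from that theorem then gives $\widehat\kappa^{\rho\OO}-\kappa^{\rho\OO}\to 0$ in probability. Feeding this into Lemma \ref{kernel_to_susceptibility} finishes the proof.
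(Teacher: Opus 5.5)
Your proposal is correct and follows the paper's route. The coefficient term is bounded by $\sup|\rho|\cdot\sup|g_n-g| = O_p(1/\sqrt{n})$, and in the kernel term $\rho$ is a deterministic bounded amplitude appearing identically in the empirical and population moments, so the comparison goes through as before. Your Step 2 makes explicit what the paper simply asserts, namely that the level-set asymptotics only need a continuous amplitude. Two details there need tightening: the Taylor remainder should be taken only in the coordinates $u_J$ attaining the minimum in $\lambda_l$ (it is $O(|u_J|)$, not $O(|u|)$), and it may lower the multiplicity $m_l$ rather than raise $\lambda_l$, which is still subleading.
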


\begin{proof}
The lift of $\rho g$ is $\rho(w) \wt g(x,w)$.  Since $\rho$ is $x$-independent, the empirical coefficient is $(\rho g)_n = \rho g_n$.  We follow the proof of Theorem \ref{convergence_zeroth_order} (via Lemma \ref{kernel_to_susceptibility}) for the coefficient $\rho g$.  Recall that we split the expression we wish to show converges to zero into a sum of two terms, one involving the difference of empirical and population coefficients and another involving the difference of empirical and population kernels.

For the coefficient convergence term: $(\rho g)_n - \rho g = \rho(g_n - g)$, so $\sup_w |(\rho g)_n - \rho g| \le \sup_w |\rho| \cdot \sup_w |g_n - g|$.  Since $\rho$ is smooth on the compact set $W$, $\sup_w |\rho| < \infty$, and the empirical process bound for $\wt g$ gives $\sup_w |g_n - g| = O_p(1/\sqrt{n})$ as before.

For the kernel convergence term: the coupling kernel $\kappa^{\rho\OO}$ for coefficient $\rho g$ involves the posterior expectations $\bb E_{\Pi^{\mr{pop},S}_{n,\beta}}[\rho \wt g f]$ and $\bb E_{\Pi^{\mr{pop},S}_{n,\beta}}[\rho \wt g] \cdot \bb E_{\sPiPop}[f]$.  Since $\rho$ is $x$-independent, it enters the partition function integrands as a bounded smooth multiplicative factor.  The empirical-to-population moment comparison is unaffected, since $\rho$ is the same on both sides.
\end{proof}

\begin{theorem} \label{convergence_differential}
Let $\OO = P \delta_S$ where $P$ is a linear differential operator of order $M$.  Suppose that all coefficient lifts of $P$, and their derivatives up to order $M$, have RFV.  Then as $n\beta \to \infty$ and $\beta \to 0$,
\[\mr{Cov}^{\mr{res}}_{\sPiEmp}(\OO_n, \Delta K_n) - \mr{Cov}^{\mr{res}}_{\sPiPop}(\OO, \Delta K) \to 0\]
in probability.
\end{theorem}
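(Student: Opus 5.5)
By Definition \ref{def:global_renormalized}, both restricted covariances in the statement are finite sums over the charts $U_i$ and the pieces $j$ produced by Lemma \ref{tangential_normal_decomposition}. The plan is to prove convergence summand by summand. Using Definition \ref{def:restricted_differential}, each summand becomes a further finite sum over Leibniz indices,
\[(n\beta)^{-M}(-1)^{|\beta_{ij}|}(-n\beta)^{r_k}\,\mr{Cov}^{\mr{res}}_{\Pi}\big(\rho_i\,\eta_{ij}\,Q_k^{G}\,\delta_{T_{ij}},\,\Delta K\big),\]
with $G=K$ on the population side and $G=K_n$ (together with $(\eta_{ij})_n$ and $\Delta K_n$) on the empirical side. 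The prefactors $(n\beta)^{r_k-M}$ are deterministic and bounded by $1$. It therefore suffices to show, for each fixed $k$, that the empirical and population functional restricted covariances with coefficient $\rho_i \eta_{ij} Q_k^{G}$ differ by $o_p(1)$.

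I would handle this in two steps. First, replace $Q_k^{K_n}$ by $Q_k^{K}$ on the empirical side. Each $Q_k^G$ is a polynomial in normal derivatives $\dd^{\gamma}G|_{u=0}$ and $\dd^\delta\log\varphi|_{u=0}$. By Proposition \ref{derivative_of_Ls_real_analytic}, $\dd^\gamma f$ is $L^4(q)$-valued real analytic, so Watanabe's bound \cite[Theorem 5.8]{WatanabeGrey} gives $\sup_W|\dd^\gamma K_n-\dd^\gamma K| = O_p(n^{-1/2})$. The other factors are bounded in probability on compact $W$. Hence $\sup_{T}|Q_k^{K_n}-Q_k^K| = O_p(n^{-1/2})$. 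Since the restricted covariance is linear in its coefficient and bounded by $\sup|\text{coef}|\cdot\sup|\Delta K_n|$, this replacement costs $O_p(n^{-1/2})$ after multiplication by the bounded prefactor.

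Second, with the now deterministic-in-$G$ coefficient $\eta_{ij}Q_k^K$, whose lift is $\wt\eta_{ij}Q_k^K$, I would apply Lemma \ref{smooth_prefactor} with $\rho=\rho_i$ and $T=T_{ij}$. This gives exactly the functional convergence, including the passage from $\Delta K_n$ to $\Delta K$ as in Theorem \ref{convergence_zeroth_order}. Its hypotheses require the lift to be real analytic with RFV. Real analyticity comes from Remark \ref{rmk:analyticity_partition}. RFV is inherited from Lemma \ref{tangential_normal_decomposition}, since $\eta_{ij}$ is built from the coefficient lifts and their derivatives up to order $M$, which are assumed RFV. Multiplication by the $x$-independent bounded analytic factor $Q_k^K$ preserves the RFV inequality near points where $Q_k^K\neq 0$. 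This last point is the place that needs care.

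I expect the main obstacle to be the RFV and moment-lemma hypotheses for the product coefficient $\eta_{ij}Q_k^K$. The Leibniz coefficients can vanish on $W_0$: first normal derivatives of $K$ vanish there, as noted in Remark \ref{rmk:normalization}. The monomial form used in Theorem \ref{coupling_kernel_convergence} then has a larger insertion exponent, and the RFV inequality $\bb E_q[\psi^2]\le c_0|\bb E_q\psi|$ need not survive multiplication by a vanishing factor. My first attempt would be to observe that Theorem \ref{coupling_kernel_convergence} only needs the insertion $\wt g f$ to be analytic with a monomial leading term after resolution, which is true for any nonzero analytic coefficient. If $Q_k^K\equiv 0$ on $T_{ij}$, the term vanishes identically on both sides up to the $O_p(n^{-1/2})$ error already controlled. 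Under this reading RFV is used only for the empirical-process bound on $(\eta_{ij})_n-\eta_{ij}$, which does not involve $Q_k$. Finally, the sum over $(i,j,k)$ is finite and independent of $n$ and $\beta$, so summing the $o_p(1)$ errors completes the proof.
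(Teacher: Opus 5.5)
Your proposal follows the paper's proof. You decompose via Definitions \ref{def:global_renormalized} and \ref{def:restricted_differential}, replace $Q^{K_n}_k$ by $Q^K_k$ at cost $O_p(n^{-1/2})$ using the empirical process bound on normal derivatives of $f$, and then apply the functional result (Lemma \ref{smooth_prefactor} with Theorem \ref{convergence_zeroth_order}) to the coefficient with lift $Q^K_k\wt\eta_{ij}$.

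The only structural difference is that you run the functional argument for every Leibniz term, using only $(n\beta)^{r_k-M}\le 1$. The paper does this only for the leading terms $r_j=M$. It disposes of the subleading ones with $(n\beta)^{r_j-M}\to 0$ times an $O_p(1)$ bound. Both work, but the paper's split means the subleading coefficients, which involve higher normal derivatives of $K$ and of $\log\varphi$, never have to satisfy the hypotheses of the functional theorem.

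The obstacle you flag at the end is not real, and your fallback is unnecessary. RFV survives multiplication by any bounded $x$-independent factor, whether or not that factor vanishes on $W_0$:
\[\bb E_q\big[(Q^K_k\wt\eta)^2\big] = (Q^K_k)^2\,\bb E_q[\wt\eta^2] \le c_0\,|Q^K_k|\,\big|Q^K_k\,\bb E_q[\wt\eta]\big| \le \big(c_0\sup|Q^K_k|\big)\,\big|\bb E_q[Q^K_k\wt\eta]\big|.\]
So the hypothesis of Lemma \ref{smooth_prefactor} holds directly with constant $c_0\sup|Q^K_k|$. This is exactly the content of the paper's ``inherits RFV from $\wt\eta$''.

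Your alternative reading of where RFV enters would not have been a safe substitute either. The sup-norm bound on $(\eta_{ij})_n-\eta_{ij}$ needs only $L^2(q)$-valued analyticity. RFV is what, as for $f$ in Watanabe's theory, yields the factorization $\wt g = u^j a_g(x,u)$ of the lift. That factorization feeds the standard form used in Theorem \ref{coupling_kernel_convergence}, so you could not simply drop RFV for the product coefficient.
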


\begin{proof}
By Definition \ref{def:global_renormalized}, the renormalized susceptibility is a finite sum over $i,j$ of terms involving $\rho_i \mr{Cov}^{\mr{res}}_\Pi(\dd^{\beta_{ij}}(\eta_{ij} \delta_{T_{ij}}), \Delta K; M)$.  By Remark \ref{rmk:analyticity_partition}, each $\eta_{ij}$ has an analytic lift with RFV, and each $\dd^{\beta_{ij}}(\eta_{ij}\delta_{T_{ij}})$ is a local term to which Definition \ref{def:restricted_differential} applies with the global order $M$.  By Lemma \ref{smooth_prefactor}, multiplication by $\rho_i$ preserves convergence, so it suffices to prove convergence for each local term $\dd^{\beta_{ij}}(\eta_{ij}\delta_{T_{ij}})$ with $\dd^{\beta_{ij}}$ purely normal of order $|\beta_{ij}| \le M$.

By Definition \ref{def:restricted_differential}, the difference of empirical and population local restricted covariances of $\dd^\beta(\eta\delta_T)$ is
\[\sum_j (-1)^{|\beta|}(n\beta)^{r_j - M} \Big[\mr{Cov}^{\mr{res}}_{\sPiEmp}(\eta_n Q_j^{K_n} \delta_T, \Delta K_n) - \mr{Cov}^{\mr{res}}_{\sPiPop}(\eta Q_j^K \delta_T, \Delta K)\Big].\]

For the leading terms ($r_j = |\beta| = M$) we decompose the bracketed difference as
\begin{align*}
&\mr{Cov}^{\mr{res}}_{\sPiEmp}(\eta_n Q_j^{K_n} \delta_T, \Delta K_n) - \mr{Cov}^{\mr{res}}_{\sPiPop}(\eta Q_j^K \delta_T, \Delta K) \\
&\quad = \mr{Cov}^{\mr{res}}_{\sPiEmp}\big(\eta_n (Q_j^{K_n} - Q_j^K) \delta_T, \Delta K_n\big) \\
&\qquad + \mr{Cov}^{\mr{res}}_{\sPiEmp}(\eta_n Q_j^K \delta_T, \Delta K_n) - \mr{Cov}^{\mr{res}}_{\sPiPop}(\eta Q_j^K \delta_T, \Delta K).
\end{align*}
The first line has coefficient $\eta_n(Q_j^{K_n} - Q_j^K)$, which is $O_p(1/\sqrt{n})$ in sup norm (since $\eta_n$ is $O_p(1)$ and $Q_j^{K_n} - Q_j^K = O_p(1/\sqrt{n})$ by Proposition \ref{derivative_of_Ls_real_analytic} and the empirical process bound \cite[Theorem 5.8]{WatanabeGrey} applied to the normal derivatives of $f$).  The restricted covariance with this coefficient is therefore $O_p(1/\sqrt{n}) \to 0$.

The second line is the convergence of the functional restricted covariance with the deterministic coefficient $\eta Q_j^K$.  Since $Q_j^K$ is $x$-independent, the lift of $\eta Q_j^K$ is $Q_j^K(w) \wt\eta(x,w)$, which is $L^2(q)$-valued real analytic and inherits RFV from $\wt\eta$.  Theorem \ref{convergence_zeroth_order} applies, giving convergence to zero.

The subleading terms with $r_j < M$ converge to zero in probability.  Indeed, the prefactor $(n\beta)^{r_j - M} \to 0$, so it suffices to show the bracketed difference is bounded in probability.  Since $\eta_n$, $Q_j^{K_n}$, and $\Delta K_n$ are all bounded in probability on compact $T$ (by the empirical process bound \cite[Theorem 5.8]{WatanabeGrey}), each restricted covariance is of class $O_p(1)$.
\end{proof}

\begin{theorem} \label{asymptotic_unbiasedness_differential}
Under the hypotheses of Theorem \ref{convergence_differential}, suppose additionally that $\xi \in L^a(q)$ and that $f$ and all coefficient lifts of $P$ (and their derivatives up to order $M$) are $L^b(q)$-valued real analytic, where $\frac{1}{a} + \frac{1}{b} \le \frac{1}{4(M+1)}$.  Then
\[\bb E\Big[\mr{Cov}^{\mr{res}}_{\sPiEmp}(\OO_n, \Delta K_n) \Big] - \mr{Cov}^{\mr{res}}_{\sPiPop}(\OO, \Delta K) \to 0\]
as $n\beta \to \infty$ and $\beta \to 0$.
\end{theorem}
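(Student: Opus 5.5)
We will follow the proof of Theorem \ref{asymptotic_unbiasedness}: convergence in probability plus uniform integrability gives convergence of expectations. Write
\[X_n := \mr{Cov}^{\mr{res}}_{\sPiEmp}(\OO_n, \Delta K_n) - \mr{Cov}^{\mr{res}}_{\sPiPop}(\OO, \Delta K).\]
By Theorem \ref{convergence_differential}, $X_n \to 0$ in probability. So it suffices to show that $\sup_n \bb E[|X_n|^2] < \infty$. The population term is deterministic, and it is bounded uniformly in $n\beta$ because $(n\beta)^{r_j-M} \le 1$. We therefore only need a uniform second-moment bound on the empirical restricted covariance.

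Using Definition \ref{def:global_renormalized} and Definition \ref{def:restricted_differential}, we expand the empirical quantity as a finite sum over charts $i$, local terms $j$ and Leibniz indices. Each summand has the form
\[(-1)^{|\beta|}(n\beta)^{r-M}\,\mr{Cov}^{\mr{res}}_{\sPiEmp}\big(\rho_i (\eta_{ij})_n Q^{K_n} \delta_T, \Delta K_n\big).\]
We bound each restricted covariance crudely, as in the proof of Theorem \ref{asymptotic_unbiasedness}:
\[\big|\mr{Cov}^{\mr{res}}_{\sPiEmp}(\cdots)\big| \le 2\sup_W|\rho_i|\,\sup_T|(\eta_{ij})_n|\,\sup_T|Q^{K_n}|\,\sup_W|\Delta K_n|.\]
This needs no information about the posterior. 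The Leibniz coefficient $Q^{K_n}$ is a polynomial in normal derivatives $\dd^\gamma K_n|_{u=0}$, multiplied by bounded deterministic derivatives of $\log\varphi$. It contains at most $r \le M$ factors of derivatives of $K_n$. Hence each summand is dominated by a product of at most $M+2$ random sup-norms:
\[\sup|(\eta_{ij})_n| \cdot \prod_{k=1}^{r}\sup|\dd^{\gamma_k}K_n| \cdot \sup|\Delta K_n|.\]
Each factor is the sup over a compact set of an empirical average of an $L^b(q)$-valued real analytic function, or of $\xi f$ for $\Delta K_n$. For the derivatives of $K_n$ we use Proposition \ref{derivative_of_Ls_real_analytic}.

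To bound the second moment of such a product we apply the generalized H\"older inequality with $M+2$ factors, each in $L^{2(M+2)}$ of the probability space. It then suffices that each factor has a uniformly bounded moment of order $p = 2(M+2)$.

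For a factor $\sup_W |F_n|$, where $F_n = \frac1n\sum_j \wt F(x_j,\cdot)$, Proposition \ref{Ls_pointwise} gives
\[\sup_W|F_n| \le \frac1n\sum_j G(x_j), \qquad G := \sup_W|\wt F(\cdot,w)| \in L^b(q).\]
By convexity (Jensen),
\[\bb E\big[(\sup_W|F_n|)^p\big] \le \bb E\big[G^p\big],\]
which is finite and uniform in $n$ as long as $b \ge p$. We will use this elementary bound instead of \cite[Theorem 5.8]{WatanabeGrey}, since no centering is needed. For $\Delta K_n$ the relevant envelope is $|\xi| \sup_W|f|$, and by H\"older it lies in $L^{s}(q)$ with $1/s = 1/a + 1/b$. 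The hypothesis $\frac1a+\frac1b \le \frac1{4(M+1)}$ gives $s \ge 4(M+1)$. This dominates $2(M+2)$ for $M \ge 0$, and it also forces $b \ge 4(M+1)$ for the coefficient and derivative factors. So every factor has a bounded $p$-th moment.

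Summing finitely many such terms shows $\sup_n \bb E[|X_n|^2] < \infty$. This gives uniform integrability, and with convergence in probability it yields the claim.

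The main obstacle is the exponent bookkeeping. We must check that the number of random factors, at most $M+2$ (or $M+1$ if one absorbs the structure differently), is compatible with the stated H\"older condition. We must also check that the Leibniz coefficients only involve derivatives of $f$ through $K_n$, which are covered by the hypothesis on $f$. If the exponent count comes out slightly off, we would instead ask only for a $(1+\epsilon)$-moment bound, which still implies uniform integrability. That requires each factor only in $L^{(M+2)(1+\epsilon)}$, which the hypothesis comfortably provides.
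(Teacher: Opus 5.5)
Your argument is correct, and it follows a different route from the paper's.

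\textbf{The paper's route.} It reduces to the functional case. It splits the estimator into leading Leibniz terms, to which it applies Theorem \ref{asymptotic_unbiasedness} with coefficients $\rho_i\eta_{ij}Q_j^K$. The subleading terms it disposes of with the prefactor $(n\beta)^{r_j-M}$ and a bounded second moment. The moment control comes from \cite[Theorem 5.8]{WatanabeGrey}, and the proof does not show where the exponent $4(M+1)$ is used.

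\textbf{Your route.} You prove a single uniform second-moment bound for the whole empirical restricted covariance and then combine it with Theorem \ref{convergence_differential}. The bound uses only three ingredients: the crude estimate $|\mr{Cov}^{\mr{res}}| \le 2\sup|g|\,\sup|\Delta K_n|$; generalized H\"older over at most $M+2$ random sup-norms; and the envelope/Jensen bound $\bb E[(\sup|F_n|)^p]\le \bb E[G^p]$. The last needs no centering, so Watanabe's empirical process theorem is not required.

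\textbf{What your route buys.}
\begin{enumerate}
\item On the empirical side the leading terms actually carry the coefficient $(\eta_{ij})_n Q_j^{K_n}$. This is a product of several empirical averages, not the empirical average of a single lift, so Theorem \ref{asymptotic_unbiasedness} does not literally apply to it. The paper's one-line reduction therefore leaves implicit a separate uniform-integrability argument for the correction $(\eta_{ij})_n(Q_j^{K_n}-Q_j^K)$. Your global bound covers this automatically and needs no leading/subleading distinction.
\item Your bookkeeping makes the exponent requirement explicit. Moments of order $2(M+2)$ suffice, i.e.\ the condition $\frac1a+\frac1b\le\frac{1}{2(M+2)}$ is enough. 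The stated condition $\frac1a+\frac1b\le\frac{1}{4(M+1)}$ implies it and is strictly stronger once $M\ge 1$.
\end{enumerate}

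\textbf{What the paper's route buys.} It is shorter to state, because it reuses the functional theorem verbatim.
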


\begin{proof}
As in the proof of Theorem \ref{convergence_differential}, the restricted covariance decomposes into a finite sum of functional restricted covariances with coefficients $\rho_i \eta_{ij} Q_j^K$ (leading terms) and subleading terms suppressed by $(n\beta)^{r_j - M}$.  The H\"older condition ensures that each coefficient lift and $\xi f$ are $L^4(q)$-valued real analytic (using Proposition \ref{derivative_of_Ls_real_analytic} for the derivatives of $f$ appearing in $Q_j^K$).  Theorem \ref{asymptotic_unbiasedness} applies to each leading functional term, and the subleading terms vanish in expectation since they are $O_p((n\beta)^{r_j - M})$ with bounded second moments.
\end{proof}

\subsection{Susceptibility Estimation in Practice}
We conclude by summarizing the procedure by which one may define susceptibility estimators for arbitrary differential observables in practice.  As we discussed in Section \ref{SGLD_section} there is necessarily an additional layer of imprecision that one must layer on top of the estimation we have explained above, coming from the difficulty in directly sampling from the posterior distribution.  One must instead use an approximate sampling process like stochastic gradient Langevin dynamics, and this may introduce asymptotic biases that weren't present for the idealized estimators.

The results presented above lead to the following process.
\begin{enumerate}
    \item For each summand $P_i \delta_{S_i}$ of our observable $\OO$ find an atlas $\{U_{ij}\}$ for $W$ and subordinate partition of unity $\{\rho_{ij}\}$ for which we may write $P_i$ as a sum of terms $\partial^{\beta_{ijk}} \eta_{ijk} \delta_{T_{ijk}}$ in local normal form.
    \item Decompose each such summand into a sum of Leibniz terms $Q_{ijk\ell}^K \eta_{ijk} \delta_{T_{ijk}}$.  We may discard all Leibniz terms not of leading order.
    \item Let
    \[\OO_n = \sum_{ijk\ell} \rho_{ij} Q_{ijk\ell}^{K_n} \eta_{ijk} \delta_{T_{ijk}}\]
    be obtained by replacing the population level Leibniz coefficients by empirical Leibniz coefficients.
    \item Choose a chain length $T$ and a step size $\eps > 0$.  For each submanifold $T_{ijk}$ appearing in the decomposition, run an SGLD chain of length $T$ on $T_{ijk}$ to obtain approximate samples from $\Pi^{\mr{emp},T_{ijk}}_{n,\beta}$.  Run a separate chain on $W$ to obtain approximate samples from $\PiEmp$.
    \begin{definition}
        The \emph{SGLD estimator} for $\chi_{n,\beta}^{\mr{pop,ren}}(\OO, \xi)$ is
        \begin{multline*}
        \widehat\chi_{n,\beta}^{\mr{SGLD}}(\OO, \xi) = -\sum_{ijk} \frac{1}{(n\beta)^M} \sum_\ell (-1)^{|\beta_{ijk}|} (-n\beta)^{r_\ell} \Big(\widehat{\bb E}_{T_{ijk},n}^{\mr{SGLD}}\big(\rho_{ij}\, (\eta_{ijk})_n Q_{ijk\ell}^{K_n} \Delta K_n\big) \\
        - \widehat{\bb E}_{T_{ijk},n}^{\mr{SGLD}}\big(\rho_{ij}\, (\eta_{ijk})_n Q_{ijk\ell}^{K_n}\big) \cdot \widehat{\bb E}_{W,n}^{\mr{SGLD}}(\Delta K_n)\Big)
        \end{multline*}
        where $\widehat{\bb E}_{T_{ijk},n}^{\mr{SGLD}}$ denotes the sample mean over the SGLD chain on $T_{ijk}$ (after discarding burn-in) and $\widehat{\bb E}_{W,n}^{\mr{SGLD}}$ denotes the sample mean over the full-posterior chain.
    \end{definition}
\end{enumerate}

\begin{remark}
    Notice that this definition depends on many choices: not only on hyperparameters such as $T, \eps$, but also on the representation of our original observable as a sum of the form $\sum_i P_i \delta_{S_i}$ and on the choices of atlases and partitions of unity.  While Theorem \ref{asymptotic_unbiasedness_differential} tells us that the ideal parallels of these estimators (using genuine posterior samples) are asymptotically unbiased for each fixed representation, and therefore asymptotically independent of the atlas and partition of unity, this is not expected to be the case under SGLD sampling.
\end{remark}

\subsection*{Acknowledgements}
We are grateful to Zach Furman for sharing his forthcoming updates to \cite[Appendix A]{sing2}, whose arguments we adapted for the arguments of Section \ref{moment_lemma_section}.  We would also like to thank Rohan Hitchcock and Andy Gordon for their helpful comments on a previous version of this paper.

\appendix

\section{Pseudoinverses of Susceptibility Matrices} \label{pseudoinverse}

The technique of \emph{patterning} \cite{patterning} uses susceptibilities to engineer a perturbation in the space of distributions that induces a desired effect on the model, as detected by the posterior expectation values of a specified set of observables.  This involves the calculation of an \emph{inverse} to a linear map whose matrix entries are defined by susceptibilities.  In this appendix we investigate the application of our consistency and asymptotic unbiasedness results from \S\ref{sec:functional}--\S\ref{sec:differential} to define empirical estimators applicable to this inverse problem.

We begin by setting up the problem at hand.  Fix observables $\OO_1, \ldots, \OO_H$ and perturbations $\xi_1, \ldots, \xi_m \in L^4_0(q) \cap C^\infty(X)$ each satisfying the hypotheses of Theorem \ref{convergence_differential}.  The perturbations span a finite-dimensional subspace $V \sub T_q \Delta^+(X)$ of the tangent space at the true distribution.  

\begin{definition}
The (population) \emph{susceptibility operator} is the linear map
\[\chi^{\mr{pop}}_{n,\beta} \colon V \to \RR^H, \qquad (\chi^{\mr{pop}}_{n,\beta})_{ij} = \chi^{\mr{pop,ren}}_{n,\beta}(\OO_i, \xi_j).\]
\end{definition}

We will identify the susceptibility operator with an element in $\RR^{H \times m}$ using the bases $\OO_i$ and $\xi_j$.  We will sometimes write $\chi^{\mr{pop}}$ when $n,\beta$ are clear from context.  The empirical counterpart $\chi^{\mr{emp}}_{n,\beta} \in \RR^{H \times m}$ has entries $(\chi^{\mr{emp}}_{n,\beta})_{ij} = \widehat\chi^{\mr{ren}}_{n,\beta}(\OO_i, \xi_j)$.  By Theorem \ref{convergence_differential} we have $\chi^{\mr{emp}}_{n,\beta} - \chi^{\mr{pop}}_{n,\beta} \to 0$ in probability entrywise as $n\beta \to \infty$, $\beta \to 0$, and under the hypotheses of Theorem \ref{asymptotic_unbiasedness_differential} the convergence also holds in expectation.

Given a target vector $b \in \RR^H$ specifying a desired first-order change in posterior expectations, the Moore--Penrose pseudoinverse provides the minimum-norm least-squares solution to $\chi^{\mr{pop}}_{n,\beta} h = b$ \cite{penrose1956}. 

\begin{definition}
We call the tangent vector \[h^{\mr{pop}}_{n,\beta} = (\chi^{\mr{pop}}_{n,\beta})^+ b \in V\]
the \emph{patterning perturbation} associated to $(\OO_i)_i$, $(\xi_j)_j$, and $b$.
\end{definition}

We've defined the patterning perturbation at the population level; we would like to use empirical susceptibilities to define estimators for the patterning perturbation at finite $n$.  The most na\"ive approach would be to replace $\chi^{\mr{pop}}_{n,\beta}$ by $\chi^{\mr{emp}}_{n,\beta}$ in the formula above.  This na\"ive estimator is not asymptotically unbiased in general.  The Moore--Penrose pseudoinverse is a discontinuous function of its argument: it is continuous on each rank stratum in $\RR^{H \times m}$ but has jump discontinuities across the boundaries between strata.  When $\chi^{\mr{pop}}_{n,\beta}$ is not full rank, generic perturbations $\chi^{\mr{emp}}_{n,\beta} = \chi^{\mr{pop}}_{n,\beta} + E$ increase the rank, introducing spurious small singular values whose reciprocals dominate $(\chi^{\mr{emp}}_{n,\beta})^+$.  We address this by introducing ridge regularization.

\begin{definition} \label{def:ridge_inverse}
Fix $\lambda > 0$.  The \emph{ridge-regularized inverse} is the function
\[R_\lambda \colon \RR^{H \times m} \to \RR^{m \times H}, \qquad R_\lambda(A) = A^T(AA^T + \lambda I_H)^{-1}.\]
The \emph{$\lambda$-regularized patterning perturbation} is $h^{\mr{pop},\lambda}_{n,\beta} = R_\lambda(\chi^{\mr{pop}}_{n,\beta}) b$, and its empirical estimator is $\widehat h_{n,\beta}^\lambda = R_\lambda(\chi^{\mr{emp}}_{n,\beta}) b$.
\end{definition}

The matrix $AA^T + \lambda I_H$ is positive definite for every $A \in \RR^{H \times m}$, so $R_\lambda$ is defined on all of $\RR^{H \times m}$.  In the singular value decomposition basis, $R_\lambda$ replaces the diagonal of $A^+$ by $\sigma/(\sigma^2 + \lambda)$, so as $\lambda \to 0$ the ridge-regularized inverse converges elementwise to the Moore--Penrose pseudoinverse.

\begin{theorem} \label{ridge_consistency}
Fix $\lambda > 0$.  As $n\beta \to \infty$ and $\beta \to 0$:
\begin{enumerate}
\item Under the hypotheses of Theorem \ref{convergence_differential}, the empirical $\lambda$-regularized patterning perturbation is consistent:
\[\widehat h_{n,\beta}^\lambda - h^{\mr{pop},\lambda}_{n,\beta} \to 0\]
in probability.
\item Under the additional hypotheses of Theorem \ref{asymptotic_unbiasedness_differential}, the estimator is asymptotically unbiased:
\[\bb E[\widehat h_{n,\beta}^\lambda] - h^{\mr{pop},\lambda}_{n,\beta} \to 0.\]
\end{enumerate}
\end{theorem}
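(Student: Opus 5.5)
The plan is to view the problem as the composition of the entrywise convergence established in Theorem \ref{convergence_differential} with the map $A \mapsto R_\lambda(A)b$, which is smooth on all of $\RR^{H\times m}$. The two parts then follow from a continuous mapping argument and a uniform integrability argument. The key structural point is that $R_\lambda$ is not merely continuous but globally Lipschitz. For fixed $\lambda>0$ its operator norm is bounded uniformly:
\[\|R_\lambda(A)\| = \max_\sigma \frac{\sigma}{\sigma^2+\lambda} \le \frac{1}{2\sqrt\lambda}.\]
I will also show that the derivative of $R_\lambda$ is uniformly bounded on all of $\RR^{H\times m}$. Writing $B = AA^T+\lambda I$, the derivative is
\[dR_\lambda(A)[E] = E^T B^{-1} - A^T B^{-1}(EA^T + AE^T)B^{-1}.\]
Each term is controlled by the bounds $\|B^{-1}\|\le \lambda^{-1}$, $\|A^TB^{-1}\| \le (2\sqrt\lambda)^{-1}$ and $\|A^TB^{-1}A\| \le 1$ (all checked in the SVD basis). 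Together these give $\|dR_\lambda(A)\| \le C_\lambda$ with $C_\lambda$ independent of $A$. By the mean value inequality it follows that $\|R_\lambda(A)-R_\lambda(A')\| \le C_\lambda\|A-A'\|$.

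For part (1), the Lipschitz bound gives $\|\widehat h^\lambda_{n,\beta} - h^{\mr{pop},\lambda}_{n,\beta}\| \le C_\lambda \|b\|\,\|\chi^{\mr{emp}}_{n,\beta} - \chi^{\mr{pop}}_{n,\beta}\|$. The right-hand side tends to zero in probability, because each of the finitely many entries does by Theorem \ref{convergence_differential}. The moving target $\chi^{\mr{pop}}_{n,\beta}$ causes no difficulty here, since the Lipschitz constant is uniform. This is exactly why I prefer global Lipschitz control to plain continuity: the population matrix varies with $n\beta$ and need not converge, so a pointwise continuity argument would require compactness information that we do not have.

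For part (2), I will use the same bound together with the fact that $\|\widehat h^\lambda_{n,\beta}\|$ and $\|h^{\mr{pop},\lambda}_{n,\beta}\|$ are both bounded deterministically by $\|b\|/(2\sqrt\lambda)$. The difference is therefore a uniformly bounded family of random vectors converging to zero in probability. Bounded convergence (uniform integrability being trivial here) then gives $\bb E[\widehat h^\lambda_{n,\beta}] - h^{\mr{pop},\lambda}_{n,\beta} \to 0$. On this route the extra hypotheses of Theorem \ref{asymptotic_unbiasedness_differential} are not strictly needed; alternatively, one can take expectations of the Lipschitz bound and invoke the $L^1$ convergence of the entries that those hypotheses provide.

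The main obstacle I expect is verifying carefully the uniform bound on $dR_\lambda$, in particular the cross term $A^TB^{-1}EA^TB^{-1}$. Its operator norm is at most $\|A^TB^{-1}\|\,\|E\|\,\|A^TB^{-1}\|$, and in the SVD basis the factor $\|A^TB^{-1}\|$ equals $\max_\sigma \sigma/(\sigma^2+\lambda)$, which stays bounded as $\sigma\to\infty$. The other cross term is handled the same way, using $\|A^TB^{-1}A\|\le 1$.
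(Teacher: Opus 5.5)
Your proof is correct. Part (2) is essentially the paper's argument. The paper also uses only the deterministic bound $\|R_\lambda(A)\|_{\mr{op}} \le 1/(2\sqrt\lambda)$ together with convergence in probability (via Vitali). So, as you observe, the extra hypotheses of Theorem \ref{asymptotic_unbiasedness_differential} are never actually used.

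For part (1) the routes differ. The paper notes that $R_\lambda$ is real analytic on all of $\RR^{H\times m}$ and invokes the continuous mapping theorem. You instead prove a global Lipschitz bound; your three estimates give $\|dR_\lambda(A)\| \le 9/(4\lambda)$. Your version is more careful on exactly the point you flag. In its standard form the continuous mapping theorem needs a fixed (or tight) limit, but here $\chi^{\mr{pop}}_{n,\beta}$ moves with $n\beta$, so what is really needed is uniform continuity of $R_\lambda$ along the relevant set.

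The compactness you say is unavailable is in fact at hand, so the paper's step can be repaired without your derivative computation. Each entry of $\chi^{\mr{pop}}_{n,\beta}$ is bounded uniformly in $n\beta$ by products of sup norms of continuous coefficients on compact sets; the prefactors $(n\beta)^{r_j-M}$ are at most $1$. Then $R_\lambda$ is uniformly continuous on the closed unit neighbourhood of that bounded set, and $\chi^{\mr{emp}}_{n,\beta}$ lies in it with probability tending to one.

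Your Lipschitz route avoids this bookkeeping and gives more. It yields the quantitative transfer $\|\widehat h^\lambda_{n,\beta} - h^{\mr{pop},\lambda}_{n,\beta}\| \le C_\lambda\|b\|\,\|\chi^{\mr{emp}}_{n,\beta}-\chi^{\mr{pop}}_{n,\beta}\|$. Any rate for the susceptibility estimators therefore passes directly to the patterning perturbation. The explicit constant also makes visible the $1/\lambda$ degeneration relevant to Remark \ref{rmk:vanishing_lambda}.

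One small caveat concerns your alternative for part (2). Theorem \ref{asymptotic_unbiasedness_differential} as stated gives only convergence of means, not $L^1$ convergence of the entries. The latter does follow from the uniform second-moment bounds in its proof, but your primary bounded-convergence argument is the cleaner one.
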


\begin{proof}
By Theorem \ref{convergence_differential}, $\chi^{\mr{emp}}_{n,\beta} - \chi^{\mr{pop}}_{n,\beta} \to 0$ in probability entrywise; since $H$ and $m$ are finite this is equivalent to convergence in operator norm.  The map $A \mapsto R_\lambda(A)$ is a rational function on $\RR^{H \times m}$ by construction, and since $\det(AA^T + \lambda I_H)$ is non-zero for all $A$ it is real analytic.  Therefore we can apply the continuous mapping theorem, which tells us that $R_\lambda(\chi^{\mr{emp}}_{n,\beta}) - R_\lambda(\chi^{\mr{pop}}_{n,\beta}) \to 0$ in probability.  Applying both sides to the vector $b$ proves the first claim.

For the second claim it suffices to show that the family $\{\widehat h_{n,\beta}^\lambda\}$ is uniformly integrable as $n \to \infty$, since convergence in probability together with uniform integrability implies convergence in $L^1$ by Vitali's convergence theorem.  We use the deterministic operator norm bound
\[\|R_\lambda(A)\|_{\mr{op}} \le \frac{1}{2\sqrt\lambda},\]
which holds for every $A \in \RR^{H \times m}$ and every $\lambda > 0$ \cite[Theorems 2.6 and 2.8(a)]{Kirsch}.  Hence $\|\widehat h_{n,\beta}^\lambda\| \le \|b\|/(2\sqrt\lambda)$ almost surely, which makes the family $\{\widehat h_{n,\beta}^\lambda\}$ bounded in $L^\infty$ and thus uniformly integrable.  
\end{proof}

\begin{remark} \label{rmk:vanishing_lambda}
The theorem holds for any fixed $\lambda > 0$, in which case the limiting object $h^{\mr{pop},\lambda}_{n,\beta}$ is a \emph{regularized} version of the patterning perturbation $h^{\mr{pop}}_{n,\beta}$.  In the SVD basis, $R_\lambda(\chi^{\mr{pop}}_{n,\beta})$ acts diagonally with non-zero entries $\sigma_\alpha/(\sigma_\alpha^2 + \lambda)$, giving an approximation error
\[\|R_\lambda(\chi^{\mr{pop}}_{n,\beta}) - (\chi^{\mr{pop}}_{n,\beta})^+\|_{\mr{op}} = \frac{\lambda}{\sigma_{\mr{min}}(\sigma_{\mr{min}}^2 + \lambda)}\]
as $\lambda \to 0$, where $\sigma_{\mr{min}}$ is the smallest positive singular value of $\chi^{\mr{pop}}_{n,\beta}$.  In particular, while one could study the behavior of our estimators as $\lambda \to 0$ together with $n\beta \to \infty$, this would require a uniform lower bound on $\sigma_{\mr{min}}(\chi^{\mr{pop}}_{n,\beta})$ for $n\beta$ sufficiently large, which we do not address here.
\end{remark}

\bibliographystyle{alpha}
\bibliography{references}

\newcommand{\etalchar}[1]{$^{#1}$}
\begin{thebibliography}{WHvW{\etalchar{+}}25}

\bibitem[AFH25]{sing2}
Maxwell Adam, Zach Furman, and Jesse Hoogland.
\newblock The loss kernel: A geometric probe for deep learning interpretability.
\newblock {\em arXiv:2509.26537}, 2025.

\bibitem[BG14]{bochkina2014nonregular}
Natalia~A. Bochkina and Peter~J. Green.
\newblock The {B}ernstein--von {M}ises theorem and nonregular models.
\newblock {\em The Annals of Statistics}, 42(5):1850--1878, 2014.

\bibitem[BWHM25]{lang2}
George Baker, George Wang, Jesse Hoogland, and Daniel Murfet.
\newblock Structural inference: Interpreting small language models with susceptibilities.
\newblock {\em arXiv:2504.18274}, 2025.

\bibitem[DP17]{drton2017sbic}
Mathias Drton and Martyn Plummer.
\newblock A {B}ayesian information criterion for singular models.
\newblock {\em Journal of the Royal Statistical Society: Series B (Statistical Methodology)}, 79(2):323--380, 2017.

\bibitem[GBJ18]{giordano2018covariances}
Ryan Giordano, Tamara Broderick, and Michael~I. Jordan.
\newblock Covariances, robustness, and variational {B}ayes.
\newblock {\em Journal of Machine Learning Research}, 19(51):1--49, 2018.

\bibitem[GBW{\etalchar{+}}26]{lang3}
Andrew Gordon, Garrett Baker, George Wang, William Snell, Stan van Wingerden, and Daniel Murfet.
\newblock Towards spectroscopy: Susceptibility clusters in language models, 2026.

\bibitem[Gus00]{gustafson2000local}
Paul Gustafson.
\newblock Local robustness in {B}ayesian analysis.
\newblock In David Rios~Insua and Fabrizio Ruggeri, editors, {\em Robust {B}ayesian Analysis}, volume 152 of {\em Lecture Notes in Statistics}, pages 71--88. Springer, New York, 2000.

\bibitem[Ham74]{hampel1974influence}
Frank~R. Hampel.
\newblock The influence curve and its role in robust estimation.
\newblock {\em Journal of the American Statistical Association}, 69(346):383--393, 1974.

\bibitem[HH25]{hitchcock2025global}
Rohan Hitchcock and Jesse Hoogland.
\newblock From global to local: A scalable benchmark for local posterior sampling, 2025.

\bibitem[Hit26]{HitchcockThesis}
Rohan Hitchcock.
\newblock {\em Singular learning theory for deep learning interpretability}.
\newblock PhD thesis, University of Melbourne, 2026.

\bibitem[Joy12]{Joyce}
Dominic Joyce.
\newblock On manifolds with corners.
\newblock In Stanis{\l}aw Janeczko, Jun Li, and Duong~H. Phong, editors, {\em Advances in Geometric Analysis}, volume~21 of {\em Advanced Lectures in Mathematics}, pages 225--258. International Press, 2012.
\newblock arXiv:0910.3518.

\bibitem[Kir21]{Kirsch}
Andreas Kirsch.
\newblock {\em An Introduction to the Mathematical Theory of Inverse Problems}, volume 120 of {\em Applied Mathematical Sciences}.
\newblock Springer, 3rd edition, 2021.

\bibitem[KL17]{koh2017understanding}
Pang~Wei Koh and Percy Liang.
\newblock Understanding black-box predictions via influence functions.
\newblock In {\em Proceedings of the 34th International Conference on Machine Learning (ICML)}, volume~70 of {\em PMLR}, pages 1885--1894, 2017.

\bibitem[KM97]{KrieglMichor}
Andreas Kriegl and Peter~W. Michor.
\newblock {\em The Convenient Setting of Global Analysis}, volume~53 of {\em Mathematical Surveys and Monographs}.
\newblock American Mathematical Society, 1997.

\bibitem[Kub57]{Kubo}
Ryogo Kubo.
\newblock Statistical-mechanical theory of irreversible processes. i. general theory and simple applications to magnetic and conduction problems.
\newblock {\em Journal of the Physical Society of Japan}, 12(6):570--586, 1957.

\bibitem[KWA{\etalchar{+}}25]{singfluence1}
Philipp~Alexander Kreer, Wilson Wu, Maxwell Adam, Zach Furman, and Jesse Hoogland.
\newblock Bayesian influence functions for hessian-free data attribution, 2025.

\bibitem[Mel96]{Melrose}
Richard~B Melrose.
\newblock {\em Differential Analysis on Manifolds with Corners}.
\newblock \url{http://www-math.mit.edu/~rbm/book.html}, 1996.

\bibitem[MRP{\etalchar{+}}25]{mlodozeniec2025distributional}
Bruno Mlodozeniec, Isaac Reid, Sam Power, David Krueger, Murat~A. Erdogdu, Richard~E. Turner, and Roger Grosse.
\newblock Distributional training data attribution: What do influence functions sample?
\newblock In {\em Advances in Neural Information Processing Systems 38 (NeurIPS)}, 2025.

\bibitem[Pen56]{penrose1956}
Roger Penrose.
\newblock On best approximate solutions of linear matrix equations.
\newblock {\em Mathematical Proceedings of the Cambridge Philosophical Society}, 52(1):17--19, 1956.

\bibitem[RAMR26]{ray2026tempering}
Ruchira Ray, Marco Avella~Medina, and Cynthia Rush.
\newblock Statistical guarantees for data-driven posterior tempering.
\newblock {\em arXiv preprint arXiv:2601.09122}, 2026.

\bibitem[VZT16]{vollmer2016sgld}
Sebastian~J. Vollmer, Konstantinos~C. Zygalakis, and Yee~Whye Teh.
\newblock Exploration of the {(Non-)A}symptotic bias and variance of stochastic gradient {L}angevin dynamics.
\newblock {\em Journal of Machine Learning Research}, 17(159):1--48, 2016.

\bibitem[Wat09]{WatanabeGrey}
Sumio Watanabe.
\newblock {\em Algebraic Geometry and Statistical Learning Theory}, volume~25 of {\em Cambridge Monographs on Applied and Computational Mathematics}.
\newblock Cambridge University Press, 2009.

\bibitem[Wat13]{WatanabeWBIC}
Sumio Watanabe.
\newblock A widely applicable {B}ayesian information criterion.
\newblock {\em Journal of Machine Learning Research}, 14(27):867--897, 2013.

\bibitem[Wat18]{WatanabeGreen}
Sumio Watanabe.
\newblock {\em Mathematical Theory of {B}ayesian Statistics}.
\newblock CRC Press, 2018.

\bibitem[WHvW{\etalchar{+}}25]{lang1}
George Wang, Jesse Hoogland, Stan van Wingerden, Zach Furman, and Daniel Murfet.
\newblock Differentiation and specialization of attention heads via the refined local learning coefficient.
\newblock In {\em Proceedings of The 13th International Conference on Learning Representations}, 2025.

\bibitem[WM26]{patterning}
George Wang and Daniel Murfet.
\newblock Patterning: The dual of interpretability, 2026.

\bibitem[WT11]{wellingteh2011}
M.~Welling and Y.~W. Teh.
\newblock Bayesian learning via stochastic gradient {L}angevin dynamics.
\newblock In {\em Proceedings of the 28th International Conference on Machine Learning}, 2011.

\end{thebibliography}

\end{document}